\numberwithin{equation}{section}
\tikzset{
  symbol/.style={
    draw=none,
    every to/.append style={
      edge node={node [sloped, allow upside down, auto=false]{$#1$}}}
  }
}
\begin{document}

\title{Divisor varieties of symmetric products\\\vspace{10pt}\large{John Sheridan}\vspace{-15pt}}
\author{John Sheridan}
\address{Department of Mathematics, Stony Brook University, Stony Brook, New York 11794}
\email{{\tt john.sheridan@stonybrook.edu}}
\date{\today}

\maketitle

\section{Introduction}

The geometry of divisors on algebraic curves has been
studied extensively over the years. The foundational results of this Brill-Noether theory - due to Kempf \cite{kempf}, Kleiman-Laksov \cite{kleiman-laksov},
Griffiths-Harris \cite{griffhar}, Gieseker \cite{gieseker} and Fulton-Lazarsfeld \cite{fullaz} - imply that on a general genus $g$ curve $C$, the varieties $G^r_d(C)$ of degree $d$, dimension $r$ linear series
on $C$ are smooth, irreducible
projective varieties of known dimension depending only on $g$, $d$ and $r$. The theory underlying these results is particularly satisfying
because of its power in connecting the concrete
geometry of curves in projective space with the more abstract notion
of varying a line bundle \textit{continuously} in its cohomology class
- a notion more intrinsic to the curve.\\
\\
In higher dimensions, the story is less well
understood. Lopes-Pardini-Pirola have obtained a Kempf-type existence
result for the Brill-Noether theory of divisors on surfaces in \cite{lpp}. Deformations
of the canonical linear series have been studied by making use of the
generic vanishing theorem of Green-Lazarsfeld \cite{GL1,GL2} and some related
foundational results on the so-called \textit{paracanonical system}
were given by Lopes-Pardini-Pirola in
\cite{lpp2}, extending earlier results of Beauville \cite{be} and
Lazarsfeld-Popa \cite{lazpop}. Our purpose here is to study in detail
one class of higher dimensional examples where one can hope for a
quite detailed picture, namely (the spaces of) divisors on the symmetric product of a curve.\\
\\
Turning to details, let $C$ be a smooth complex projective curve of genus $g$, and denote by $C_k$ the $k^{\text{th}}$ symmetric product of $C$, a smooth projective variety of dimension $k$. We will be interested in two distinguished types of line bundle on $C_k$ arising from a line bundle $L$ on $C$. First, $L$ determines a line bundle $T_L$ by symmetrizing the product $L^{\boxtimes k}$ along the quotient map $C^k \rightarrow C_k$. We find $H^0(C_k,T_L) = S^kH^0(C,L)$. Such an $L$ also gives rise to an ``anti-symmetric'' line bundle $N_L$ on $C_k$ arising as the determinant of the tautological vector bundle $E_{k,L} := L^{[k]}$ on $C_k$. One has $H^0(C_k,N_L) = \wedge^kH^0(C,L)$. This latter bundle in particular has enjoyed much interest in the literature, e.g. in \cite{einlaz} and \cite{einlazyang}. We will write $n(d) := c_1(N_L)$ and $t(d) := c_1(T_L)$ for $L$ of degree $d$.\\
\\
Given a N\'{e}ron-Severi class $\lambda \in \text{NS}(X)$, denote by $\text{Div}^{\lambda}(X)$, $\text{Pic}^{\lambda}(X)$ the spaces of effective divisors and line bundles, respectively, of class $\lambda$. The Abel-Jacobi map $u:\text{Div}^{\lambda}(X) \rightarrow \text{Pic}^{\lambda}(X)$ sends $D \mapsto \mathscr{O}_X(D)$. Recall that $\text{Div}^{\lambda}(X)$ can be realized as $\mathbb{P}\mathcal{F}_{\lambda}$ for an appropriate Picard sheaf $\mathcal{F}_{\lambda}$ on $\text{Pic}^{\lambda}(X)$ (see, e.g. \cite[Ex. 9.4.7]{fga}).\\
\\
Our results describing the structure of the divisor varieties $\text{Div}^{n(d)}(C_k)$ and $\text{Div}^{t(d)}(C_k)$, our chief objects of study in this paper, form the content of Theorems \ref{irreds} and \ref{inters} and Corollary \ref{components}. We briefly overview the picture in this introduction and then indicate the more refined statements in later sections.\\
\\
The first point to make about these divisor varieties is that it is
common for them to contain ``exorbitant'' components when $k \geq 2$ -
a term introduced by Beauville in \cite{be} (which he credits to
Enriques) to refer to components of the divisor variety \textit{other} than one which dominates $\text{Pic}^{\lambda}(C_k)$. We have:

\begin{theorem}\label{1}
Assume $C$ is a Petri-general curve and let $R_d$ and $r_d$ denote the
maximal and minimal dimensions respectively of $|L|$ for a degree $d$
bundle $L$ on $C$. If $\rho(g,d,R_d) \not= 0$ then the varieties
$\text{Div}^{t(d)}(C_k)$ and $\text{Div}^{n(d)}(C_k)$ have $R_d - r_d$
and $(R_d - r_d) - (k-r_d) = R_d - k$ irreducible components,
repsectively, for $k \geq 3$. And each has $\lceil (R_d-r_d)/2 \rceil
+ \epsilon$
irreducible components for $k = 2$, where $\epsilon = (r_d + 1) \text{ \emph{mod} }2$.
\end{theorem}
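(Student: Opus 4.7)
The plan is to realize each of $\text{Div}^{t(d)}(C_k)$ and $\text{Div}^{n(d)}(C_k)$ as a projective Picard sheaf over $\text{Pic}^d(C)$, using the identifications $\text{Pic}^{t(d)}(C_k)\cong\text{Pic}^d(C)$ via $L\mapsto T_L$ (and analogously for $N_L$). The fiber over $L$ is the linear system $|T_L|=\mathbb{P}(S^kH^0(L))$, respectively $|N_L|=\mathbb{P}(\wedge^k H^0(L))$, whose dimension jumps along the Brill-Noether stratification of $\text{Pic}^d(C)$. Under the Petri-general hypothesis, each $W^r_d(C)$ is smooth irreducible of dimension $\rho(g,d,r)$; the assumption $\rho(g,d,R_d)\neq 0$ then ensures the deepest stratum $W^{R_d}_d$ is positive-dimensional, which is needed so that the extremal candidate component retains its expected shape.

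For each relevant $r$ I introduce the candidate $Z^\lambda_r\subset\text{Div}^\lambda(C_k)$, defined as the closure of the locus of divisors $D$ whose line bundle $\mathcal{O}(D)$ lies over $W^r_d\setminus W^{r+1}_d$. The $t$-case covers $r_d\leq r\leq R_d$, while the $n$-case restricts to $\max(r_d,k-1)\leq r\leq R_d$ (since $\wedge^k H^0(L)=0$ whenever $h^0(L)<k$). Each $Z^\lambda_r$ is irreducible (a projective bundle over a smooth irreducible base) of expected dimension $\rho(g,d,r)+\binom{r+k}{k}-1$ in the $t$-case, respectively $\rho(g,d,r)+\binom{r+1}{k}-1$ in the $n$-case. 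These form the pool of candidate irreducible components, and the main task is to decide which are maximal.

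The heart of the argument is the inclusion analysis among the $Z^\lambda_r$, which reduces to a delicate limit calculation: given $r<r'$, does $Z^\lambda_r$ absorb $Z^\lambda_{r'}$ (or vice versa)? This is controlled by comparing the fiber-dimension jump $\binom{r+k}{k-1}$ against the base-codimension jump $g-d+2r+2=\rho(g,d,r)-\rho(g,d,r+1)$, and by understanding how a specialization $L_n\to L_0\in W^{r+1}_d$ forces the corresponding sections to limit into a Veronese-type sublocus of $|T_{L_0}|$. The hypothesis $k\geq 3$ is designed to make this dimensional comparison monotone in a way that produces the stated counts $R_d-r_d$ for $\text{Div}^{t(d)}(C_k)$ and $R_d-k$ for $\text{Div}^{n(d)}(C_k)$. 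Carrying out this limit analysis explicitly, and verifying the absorptions geometrically through explicit degenerations of line bundles and their sections, is where I expect the main technical obstacle.

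For $k=2$ the dimension comparison balances more delicately, and the argument must be adapted. The splitting $V\otimes V=S^2V\oplus\wedge^2 V$ couples the symmetric and anti-symmetric pictures, and adjacent $r$-strata pair off to form single irreducible components. The resulting count is $\lceil(R_d-r_d)/2\rceil+\epsilon$, with the parity correction $\epsilon=(r_d+1)\bmod 2$ recording whether the bottom stratum $r=r_d$ enters a pair or stands alone. This should follow from the same dimensional bookkeeping, now performed at the boundary case where the governing inequalities become equalities.
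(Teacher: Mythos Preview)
Your setup matches the paper exactly: the candidate loci $Z^\lambda_r$ are the paper's $(C_k)^{r+1}_\lambda$, and the question is indeed which of them are maximal. But the mechanism you propose for deciding this is wrong. You frame the inclusion analysis as a numerical balance---fiber-dimension jump versus base-codimension jump---and expect that for $k\geq 3$ this comparison becomes ``monotone'' so that most strata get absorbed. In fact essentially \emph{none} of them get absorbed: for $k\geq 3$ in the $t(d)$ case, every $Z^\lambda_r$ is a separate component. The paper's argument has nothing to do with dimension comparison. The key invariant is the \emph{enclosing dimension} $\mathrm{enc}(\eta)$ of $\eta\in\wedge^k V$ (or $S^k V$): the smallest $e$ with $\eta\in\wedge^k W$ for some $W\subset V$ of dimension $e$. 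The paper proves (Theorem~\ref{deform}) that $D\in|N_L|$ deforms out of $W^{r'}_d$ into $W^{r}_d\setminus W^{r+1}_d$ if and only if $\mathrm{enc}(D)\leq e(k,r+1)$, where $e(k,n)$ is the maximal enclosing dimension in $\wedge^k\mathbb{C}^n$. The limit locus you call ``Veronese-type'' is the subspace variety $\mathrm{Sub}_{r+1}(\wedge^k H^0(L_0))$, and the whole point is that it is \emph{proper} in $|N_{L_0}|$ whenever $r+1<e(k,r'+1)$---so $Z^\lambda_r$ cannot contain $Z^\lambda_{r'}$, regardless of dimensions.

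The component count then comes not from absorptions driven by dimension but from coincidences in the function $n\mapsto e(k,n)$: for $k\geq 3$ one has $e(k,k+1)=k=e(k,k)$, which merges exactly the two smallest strata in the $n(d)$ case and accounts for the drop to $R_d-k$; for $k=2$ one has $e(2,n)=2\lfloor n/2\rfloor$ because a skew-symmetric form has even rank, and this is what pairs adjacent strata. Your proposed explanation via the decomposition $V\otimes V=S^2V\oplus\wedge^2V$ is not the operative mechanism. Without the enclosing-dimension invariant and the deformation criterion, your ``explicit degenerations'' step has no way to show that a generic divisor over a deep stratum is \emph{not} a limit from a shallow one, and the argument cannot close.
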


It is then natural to ask how these components intersect. Starting with a vector space $V$ one can define a kind of rank-locus called a \textbf{subspace-variety}
$$
\text{Sub}_r(\wedge^kV) := \{[\eta]
\in \mathbb{P}\wedge^kV : \eta \in \wedge^kW \text{
  for some }W \in G(r,V)\}.
$$
$\text{Sub}_r(S^kV)$ is defined similarly inside $\mathbb{P}S^kV$. We then have:

\begin{theorem}\label{2}
When $C$ is Petri-general, the
  intersections of the irreducible components of $\text{Div}^{n(d)}(C_k)$ and $\text{Div}^{t(d)}(C_k)$ are fibered along the Abel-Jacobi map $u$ in \textbf{subspace-varieties} $\text{Sub}_r(\wedge^kH^0(L))$ and $\text{Sub}_r(S^kH^0(L))$ respectively, for $L$ in Brill-Noether loci $W^{r+1}_d(C) \subset \text{Pic}^d(C) \cong \text{Pic}^{\lambda}(C_k)$.
\end{theorem}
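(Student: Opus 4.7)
The plan is to realize each irreducible component of $\text{Div}^{n(d)}(C_k)$ as the image of a natural projective bundle over a Brill-Noether Grassmannian, and then read off the intersection structure fiberwise along $u$. From the component description furnished by Theorem \ref{1}, each component $Z_r \subset \text{Div}^{n(d)}(C_k)$ arises as the image of a morphism
\[
\Phi_r \colon \mathbb{P}\wedge^k \mathcal{V}_r \longrightarrow \text{Div}^{n(d)}(C_k),
\qquad (L, V, [\eta]) \longmapsto Z(\eta),
\]
where $\mathcal{V}_r$ is the universal subbundle on the Brill-Noether Grassmann bundle $G^r_d(C)$ of pairs $(L, V)$ with $V \subset H^0(L)$ of the appropriate dimension. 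Petri-generality ensures (by Gieseker/Fulton--Lazarsfeld) that $G^r_d(C)$ is irreducible, and hence so is $Z_r$. The parallel construction with $S^k$ in place of $\wedge^k$ yields the components of $\text{Div}^{t(d)}(C_k)$.

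I will then compute $u|_{Z_r}$ fiber by fiber. Over $L$ in the open Brill-Noether stratum $W^r_d \setminus W^{r+1}_d$, the Grassmannian fiber $G^r_d|_L$ is a single point (with $V = H^0(L)$ forced), so $u^{-1}(L) \cap Z_r$ is all of $\mathbb{P}\wedge^k H^0(L)$; over a deeper stratum $L \in W^m_d$ with $m > r$, the Grassmannian fiber is the full Grassmannian $G(r, H^0(L))$, and unwinding $\Phi_r$ gives
\[
u^{-1}(L) \cap Z_r \;=\; \bigcup_{V \in G(r, H^0(L))} \mathbb{P}\wedge^k V \;=\; \text{Sub}_r\bigl(\wedge^k H^0(L)\bigr)
\]
by the very definition of the subspace variety. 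Consequently, for $r < r'$ the intersection $Z_r \cap Z_{r'}$ is supported over the smaller locus $W^{r'}_d \subseteq W^{r+1}_d$, and over $L$ there the $Z_r$-fiber $\text{Sub}_r$ is tautologically contained in the $Z_{r'}$-fiber (which is the full $\mathbb{P}\wedge^k H^0(L)$ on the open stratum of $W^{r'}_d$, and $\text{Sub}_{r'} \supseteq \text{Sub}_r$ on deeper strata). Hence $u^{-1}(L) \cap (Z_r \cap Z_{r'}) = \text{Sub}_r(\wedge^k H^0(L))$ exactly, matching the statement of Theorem \ref{2}. The argument for $\text{Div}^{t(d)}(C_k)$ proceeds identically, replacing $\wedge^k$ with $S^k$ throughout.

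The main obstacle lies in the first step: establishing that the $Z_r$ above do exhaust the irreducible components of $\text{Div}^{n(d)}(C_k)$ --- so that no intersections can arise outside the subspace-variety strata predicted by the fiber computation. This is the full content of Theorem \ref{1} and rests on the irreducibility of the Brill-Noether varieties $G^r_d(C)$ for Petri-general $C$, together with sharp dimension estimates showing each $Z_r$ is maximal among irreducible subvarieties of $\text{Div}^{n(d)}(C_k)$.
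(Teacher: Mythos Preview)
Your approach is correct in substance and runs parallel to the paper's, but the logical order is inverted in a way worth noting. The paper defines the components $(C_k)^e_\lambda$ as \emph{closures} of the loci sitting over the open Brill--Noether strata $W^{e-1}_d\setminus W^e_d$, and then invokes a separate deformation result (Theorem~\ref{deform}) to identify the fiber over each deeper stratum as $\mathrm{Sub}_e(\wedge^k H^0(L))$: both directions of that theorem are used, one for upper semicontinuity of the enclosing dimension and one to produce the explicit one-parameter family showing the closure fills out to the full subspace variety. Only afterwards does the paper construct your map $\Phi_r$ (there denoted $\phi_e$, equation~\eqref{surjection}) and use Theorems~\ref{irreds} and~\ref{inters} to verify it surjects onto the component. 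You instead \emph{define} $Z_r$ as the image of $\Phi_r$; since image under a morphism commutes with restriction to a fiber of $u$, the identification $u^{-1}(L)\cap Z_r=\bigcup_{V}\mathbb{P}\wedge^k V=\mathrm{Sub}_{r+1}(\wedge^k H^0(L))$ is then tautological and no deformation argument is needed. This is genuinely more direct for the intersection statement; what you forgo is Theorem~\ref{deform} itself, which the paper treats as independently interesting since it characterizes precisely which divisors in a fixed $|N_L|$ extend along a given arc in $\mathrm{Pic}$.

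Two points to tighten. First, you assert that the $Z_r$ exhaust the irreducible components by citing Theorem~\ref{1}, but that theorem as stated is only a \emph{count}; what you need is the self-contained observation that the $Z_r$ are closed (proper image), irreducible (since $G^r_d(C)$ is, for $C$ Petri-general), and cover $\mathrm{Div}^\lambda(C_k)$, so the components are exactly the maximal $Z_r$ under inclusion. Which of the $Z_r$ are redundant is then decided not by ``dimension estimates'' but by the purely linear-algebraic coincidences among subspace varieties recorded in Remark~\ref{coincidences} (e.g.\ $\mathrm{Sub}_k=\mathrm{Sub}_{k+1}$ forces $Z_k\subseteq Z_{k-1}$). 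Second, watch the indexing: a $g^r_d$ has $\dim V=r+1$, so your Grassmannian fiber should read $G(r{+}1,H^0(L))$ and the resulting fiber of $Z_r$ is $\mathrm{Sub}_{r+1}$, not $\mathrm{Sub}_r$; and to make $\Phi_r$ an honest morphism of schemes (rather than a map on points) you are implicitly using the Picard-sheaf identification $\mathcal{F}_{n(d)}\cong\wedge^k\mathcal{F}_d$ of Theorem~\ref{sheaf-id}, which supplies the surjection $c^*\wedge^k\mathcal{F}_d\twoheadrightarrow\wedge^k\mathcal{S}^\vee$ needed to embed $\mathbb{P}\wedge^k\mathcal{V}_r$ in $\mathrm{Div}^{n(d)}(C_k)$.
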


\noindent A more precise version of this description will be given in Theorem \ref{inters} and will ultimately be a consequence of Theorems \ref{sheaf-id} and \ref{deform}, which identify the Picard sheaves for each algebraic class $\lambda$ and describe the way in which sections deform as a line bundle varies in $\text{Pic}^{\lambda}(X)$.\\
\\
First though, we take a moment to study some examples. To whet the appetite of the reader, we outline here the simplest examples on $C_2$. By way of preparation, note that a pencil $\pi : C \rightarrow \mathbb{P}^1$ in $|L|$ gives rise to a corresponding \textit{trace divisor} $D_{\pi} := \{\xi \in C_2 : \xi\text{ is in a fiber of }\pi\} \in |N_L|$.

\begin{example}\label{eg}[Paracanonical series]
Suppose $C$ is a smooth curve of genus $g$. The Brill-Noether variety
$G^r_{2g-2}(C)$ dominates $\text{Pic}^{2g-2}(C)$ for $g
\geq 3$ and $1 \leq r \leq g - 2$ and, since paracanonical bundles satify Petri's
condition, it is smooth. This means that any $r$-dimensional linear series $V$ of canonical
divisors on $C$ can be deformed \textit{algebraically} in any
direction $v\in T_{K_C}\text{Pic}^{2g-2}(C)$.\\
\\
For general $g \geq 3$, the key observation is that the \textit{parity} of $g$ determines whether or not \textit{all} canonical
divisors $D \in |K_{C_2}|$ on the symmetric square arise (in a sense to be made precise later) from deforming paracanonical linear series on $C$. If $g$ is odd then they do; if $g$ is even then there are
\textit{extra} canonical divisors on $C_2$. So when $g$ is odd, the paracanonical divisor variety $\text{Div}^{\kappa}(C_2)$ of $C_2$ is irreducible. It is a $\mathbb{P}^{{g-1\choose 2}-1}$-bundle away from $K_{C_2}$, over which it has fiber $\mathbb{P}^{{g\choose 2}-1}$. However, when $g$ is even the divisor variety has \textit{two}
components. In that case we have:
$$
\text{Div}^{\kappa}(C_2) \cong \Sigma \cup |K_{C_2}|
$$
where $\Sigma$ is the unique component which dominates
$\text{Pic}^{\kappa}(C_2)$. Its intersection with the canonical series is:
$$
\Sigma\cap |K_{C_2}| = \text{Sec}_s\mathbb{G}(1,|K_C|)
$$
Here $s = (g-2)/2$, $\mathbb{G}(r,\mathbb{P})$ denotes the Grassmannian of $r$-planes in a projective space $\mathbb{P}$, and $\text{Sec}_sX$ denotes the variety of
$s$-secant-$(s-1)$-planes of $X$ in projective space.\\
\\
The following diagram gives a sense, in the even genus case, of how $\text{Div}^{\kappa}(C_2)$
maps (down) to $\text{Pic}^{\kappa}(C_2)$ via the Abel-Jacobi map:
\hspace{0pt}
\begin{center}
\includegraphics[scale=0.75]{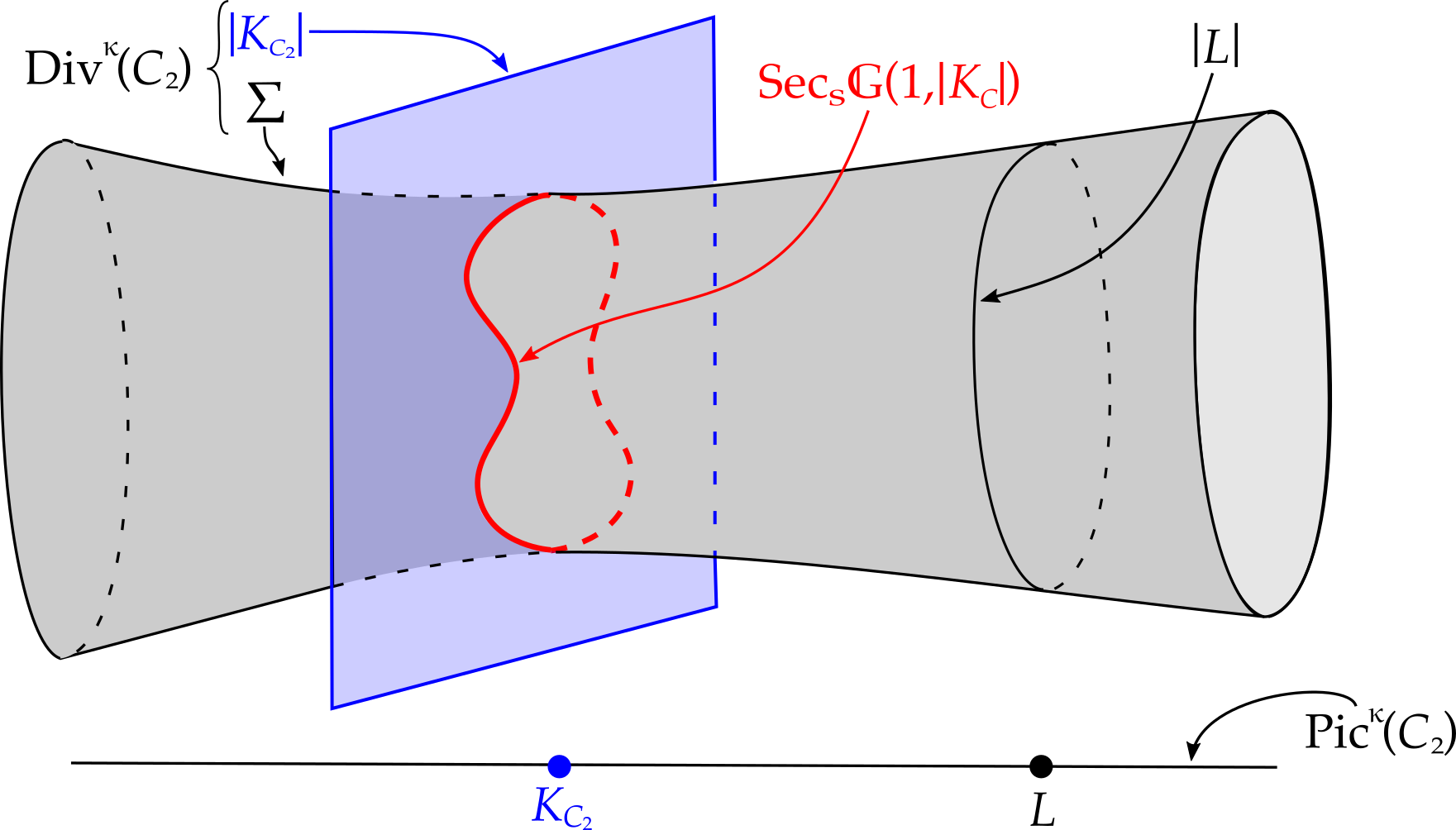}
\end{center}
In particular it is worth considering $g = 4$, in which case $s = 1$ so that the deformable canonical divisors form precisely the Grassmannian $\mathbb{G}(1,3)$ and the irreducible component $\Sigma$ is actually isomorphic to the Brill-Noether variety $G^1_6(C)$ of degree 6 pencils on $C$. So the ``extra'' canonical divisors on $C_2$ in this case are precisely those which are not trace divisors of canonical pencils on $C$ and thus cannot deform to the trace divisors of paracanonical pencils which form the remainder of $\Sigma$ away from $K_{C_2}$.
\end{example}

This example shows, on the one hand, that $K_{C_2}$ is
\textit{exorbitant} (Beauville's term from \cite{be}, mentioned above)
whenever $C$ has even genus, which here means
that the canonical linear series $|K_{C_2}|$ does not lie entirely in
the irreducible component $\Sigma$ of the paracanonical system \textit{dominating}
$\text{Pic}^{\kappa}(C_2)$ (often called the \textit{main
  paracanonical system} or
$\text{Div}^{\kappa}(C_2)_{\text{main}}$). On the other hand, this
example identifies precisely the canonical divisors of $C_2$ which deform, with a deformation of the bundle $K_{C_2}$, to paracanonical ones. The first observation was
predicted by \cite[Thm 1.3]{lpp2}, which extended results in
\cite{be}. The
second observation is a paradigm for the more general results for
divisors on symmetric products that we present in this paper.

\begin{example}[Plane curves]
Let $C$ be a smooth plane curve of degree $d \geq 5$ with $L =
\mathscr{O}_C(1)$. By a result of Marc Coppens \cite[Thm 3.2.1]{copp1}
there are no \textit{free}, complete $g^1_d$'s on $C$. Given this
fact, it can be seen that there are two kinds of $g^1_d$ on $C$:
\begin{enumerate}
\item Given $p \in \mathbb{P}^2$ the pencil of lines through $p$ cuts
  out a $g^1_d$ on $C$. Of course, this $g^1_d$ is a sub-linear system
  of the (unique) $g^2_d$.
\item Given $p\in C$ the pencil of lines in $\mathbb{P}^2$ through $p$
  now determines a $g^1_d$ for which $p$ itself as a basepoint - for a second point $q \in C$
  different from $p$, one can modify the $g^1_d$ by adding $q - p$ to
  every divisor. The result is a \textit{complete} $g^1_d$ for the
  line bundle $L_{pq} := L\otimes \mathscr{O}_C(q-p) \in W^1_d(C)$.
\end{enumerate}
Briefly, in this setting all divisors of class $\lambda = c_1(N_L)$ on $C_2$ are trace divisors of pencils on $C$. They are each of one of the two types above. Among those of type (1) are the divisors which \textit{deform} with a deformation of $L$ (and thus of $N_L$) - those come precisely from choosing $p \in \mathbb{P}^2$ to lie on the curve $C$. We note that the difference map $C^2
\rightarrow \text{Pic}^d(C)$ given by $(p,q) \mapsto L_{pq}$ is
birational onto $W^1_d(C)$ with fiber over $L$ equal to the diagonal $\Delta
\subset C^2$. Hence $G^1_d(C) \cong |L| \cup C^2$ and since all divisors on $C_2$ here are trace divisors, $\text{Div}^{\lambda}(C_2)$
and $G^1_d(C)$ coincide. So we have a decomposition into irreducible components:
$$
\text{Div}^{\lambda}(C_2) \cong \mathbb{P}^2 \cup C^2
$$
with intersection
$$
\mathbb{P}^2 \cap C^2 = \Delta \cong C
$$
Again, we can use a diagram to give a sense for how $\text{Div}^{\lambda}(C_2)$
maps (down) to $\text{Pic}^{\lambda}(C_2)$ via the Abel-Jacobi map:
\vspace{0pt}
\hspace{0pt}
\begin{center}
\includegraphics[scale=0.7]{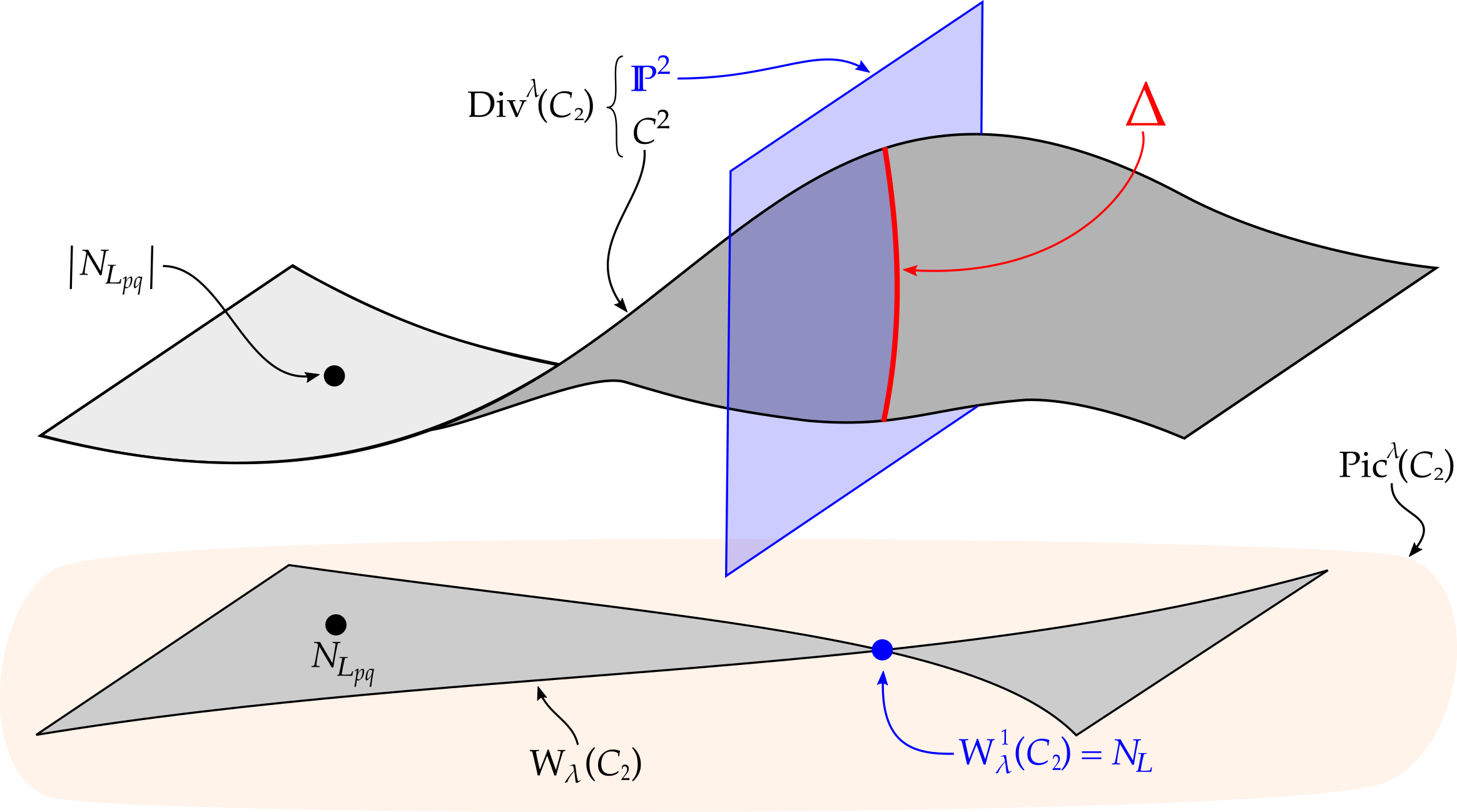}
\end{center}

\end{example}

\vspace{0pt}
This example, and the $g = 4$ case of the previous one, show the trace divisors of pencils on $C$ governing much of the divisor behavior on $C_2$ and give the first hint at the role that will be played by (deforming) higher dimensional linear series on $C$ more generally. Though it is worth pointing out that in the second
example, the
paucity of \textit{free} degree $d$ linear systems on $C$ (due essentially to the
speciality of $C$ in moduli) gives a greater role in this governance to \textit{basepointed}
pencils than is typical.\\
\\
\textbf{Organization of the paper}: In section \ref{prelims} we outline the necessary results describing
how the cohomology of line bundles $N_L$ varies as $L$ varies on the
curve.\\
\indent In section \ref{symprod} we first identify Picard sheaves of the symmetric product $C_k$ and then prove a deformation result for them. We then proceed to prove the main theorems.\\
\indent The final section consists of a variety of examples intended to
illustrate both the general picture of the main theorems as well as
some more specific related ideas.\\
\\
\textbf{Acknowledgements}: I would like to thank my advisor, Robert Lazarsfeld, for suggesting this
very interesting problem and for the many
conversations that helped build my intuition for it. Thanks also to
Frederik Benirschke, Nathan Chen, Fran\c{c}ois Greer, Tim Ryan and
Jason Starr for
valuable discussions and to the Stony Brook math community at large
for a very engaging environment.

\section{Preliminaries}\label{prelims}

\noindent Let $X$ be a smooth, projective variety over $\mathbb{C}$ and $\lambda \in \text{NS}(X)$ a Ner\'{o}n-Severi class. $\text{Pic}^{\lambda}(X)$ will then denote the space of line bundles with first Chern class $\lambda$ and we will define on it a corresponding \textit{Picard sheaf} $\mathcal{F}_{\lambda}$ which, morally, plays the role of a coherent sheaf whose fiber over $L \in \text{Pic}^{\lambda}(X)$ is naturally $H^0(X,L)^{\vee}$. Specifically, we ask that $\mathcal{F}_{\lambda}$ satisfy the property that for any quasi-coherent sheaf $\mathcal{N}$ on $\text{Pic}^{\lambda}(X)$ there is a natural isomorphism of sheaves
\begin{equation}\label{eqn-picard}
q : \text{\underline{Hom}}(\mathcal{F}_{\lambda},\mathcal{N}) \rightarrow \nu_*(\mathscr{L}_{\lambda}\otimes \nu^*\mathcal{N})
\end{equation}
for $\nu : X\times \text{Pic}^{\lambda}(X) \rightarrow \text{Pic}^{\lambda}(X)$ and $\mathscr{L}_{\lambda}$ a Poincar\'{e} line bundle (i.e. a universal line bundle) on $X\times \text{Pic}^{\lambda}(X)$. $\mathcal{F}_{\lambda}$ is only unique up to twisting by a line bundle. This implies that $\text{Div}^{\lambda}(X) \cong \mathbb{P}\mathcal{F}_{\lambda}$ as schemes over $\text{Pic}^{\lambda}(X)$, for $\text{Div}^{\lambda}(X)$ the variety of effective divisors of class $\lambda$. See \cite[9.3.10, p. 260; Ans. 9.4.7, p. 305]{fga} for more details.\\
\\
Our description of divisor varieties of symmetric products will hence follow once we establish two main results about the Picard sheaves. Suppose $\mathcal{F}_d$ is one of the classical Picard sheaves on $\text{Pic}^d(C)$ associated to $C$ and note that $\text{Pic}^d(C) \cong \text{Pic}^{n(d)}(C_k) \cong \text{Pic}^{t(d)}(C_k)$ (see section \S\ref{pic-comps}). First, we will show that
$$
\mathcal{F}_{n(d)} := \wedge^k\mathcal{F}_d \quad \text{and} \quad \mathcal{F}_{t(d)} := S^k\mathcal{F}_d
$$
are Picard sheaves for $C_k$ on $\text{Pic}^{n(d)}(C_k)$ and $\text{Pic}^{t(d)}(C_k)$ respectively. Then we will use this identification to prove a deformation result for sections of $N_L$ (resp. $T_L$) as $L$ varies in $\text{Pic}^{\lambda}(X)$ - this will yield our description of component intersections in $\text{Div}^{\lambda}(X)$.\\
\\
To identify these Picard sheaves, the idea is to globalize the isomorphisms $H^0(C_k, N_L)^{\vee} \cong \wedge^kH^0(C,L)^{\vee}$ and $H^0(C_k,T_L)^{\vee} \cong S^kH^0(C,L)^{\vee}$. The main subtlety warranting caution in this situation is the inevitable \textit{jumping} of $h^0(C,L)$ as $L$ varies.

\subsection{Group actions on coherent sheaves}\label{gp-acts}
\noindent Much of the following material is known, but we include a brief review of what we need for the benefit of the reader.\\
\\
Let $X$ and $Y$ be normal varieties over $\mathbb{C}$ and suppose $X$ admits an algebraic action of a finite group $G$. Let $\pi : X \rightarrow Y$ be a proper $G$-invariant morphism (for our purposes later, this morphism will be the quotient by $G$). If a coherent $\mathscr{O}_X$-module $\mathcal{F}$ admits an action of
$G$ commuting with that on the base (a $G$-equivariant structure), we
define its \textit{symmetrization} or \textit{equivariant pushforward} $\pi^G_*\mathcal{F} := (\pi_*\mathcal{F})^G$ on $Y$ to be the
sheaf of $G$-invariants whose sections over $U \subset Y$ are the
$G$-invariant sections of $\mathcal{F}$ on
$\pi^{-1}(U)$. If $\mathcal{F}$ is locally free, $\pi^G_*\mathcal{F}$
will also be locally free of rank equal to that of $\mathcal{F}$.

\begin{proposition}[Equivariant rank drop]\label{an-lemma}
If $u : E \rightarrow F$ is a $G$-equivariant map of locally free sheaves on $X$ that drops rank exactly along a $G$-invariant divisor $D$, then there is a $G$-equivariant isomorphism:
$$
\xymatrix{\text{det}(u) : \text{det}(E) \ar[r]^{\hspace{-10pt}\cong} & \text{det}(F) \otimes \mathscr{O}_X(-D)}
$$
\end{proposition}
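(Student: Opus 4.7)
The plan is to construct the isomorphism from $\det(u)$, viewed as a global section of a suitable line bundle, and to observe that $G$-equivariance follows from functoriality.

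Since $\det(u)$ is intended as a map between the top exteriors $\det E$ and $\det F$, the statement tacitly requires $\text{rank}(E) = \text{rank}(F) =: r$; in any case, when the degeneracy scheme has codimension $1$ and sits inside the (expected) codimension-$|\mathrm{rk}\,E - \mathrm{rk}\,F| + 1$ locus, matching $r$ is forced. Under this equality, the rank-drop hypothesis makes $u$ an isomorphism on the complement $X \setminus D$. I would then view $\det(u)$ as a global section $s$ of $\mathcal{M} := \det F \otimes (\det E)^{-1}$, and this section is nonvanishing off $D$.

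The key step is to verify that the zero scheme $Z(s)$ coincides with $D$ as subschemes. I would work locally: near a point of $D$, after trivializing $E$ and $F$, the map $u$ becomes an $r \times r$ matrix $M$ over the local ring, and the scheme-theoretic degeneracy locus (the $0$th Fitting ideal of $\mathrm{coker}(u)$) is cut out by $(\det M)$. By hypothesis this ideal agrees with the ideal sheaf of $D$, so $\det M$ is a local uniformizer for $D$ (up to a unit), which means $s$ vanishes to order exactly $1$ along $D$. Consequently $s$ exhibits an isomorphism $\mathcal{O}_X(D) \xrightarrow{\cong} \mathcal{M}$, which rearranges to the desired iso $\det E \cong \det F \otimes \mathcal{O}_X(-D)$.

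For $G$-equivariance: functoriality of $\det$ and the $G$-equivariance of $u$ imply that $s = \det(u)$ is $G$-invariant, while $G$-invariance of the divisor $D$ equips $\mathcal{O}_X(\pm D)$ with canonical $G$-linearizations (the natural action on the ideal sheaf $\mathcal{I}_D$, dualized). The isomorphism determined by $s$ intertwines these structures and is thus $G$-equivariant. I expect the main obstacle to be the scheme-theoretic interpretation of ``drops rank exactly along $D$'': once one reads this as an equality of $0$th Fitting ideals with $\mathcal{I}_D$, the rest is a direct local computation and a formal functoriality argument.
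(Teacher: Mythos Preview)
Your argument is correct. The paper's own proof is much terser: it simply cites \cite[Lemma 5.1]{an} for the existence of the isomorphism and then remarks that $G$-equivariance is clear from the setup. You have instead supplied the content of that cited lemma directly, via the local Fitting-ideal computation identifying the zero scheme of $\det(u)$ with $D$, and then handled equivariance by functoriality. So your route is more self-contained and elementary, while the paper's is a one-line appeal to the literature; both arrive at the same conclusion by the same underlying mechanism (the determinant section cuts out $D$), and your explicit unpacking of ``drops rank exactly along $D$'' as an equality of Fitting ideals is exactly the right way to make the scheme-theoretic statement precise.
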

\begin{proof}
The isomorphism alone follows by \cite[Lemma 5.1]{an}. That it is $G$-equivariant is clear from the setup.
\end{proof}

\begin{proposition}[Cohomology and invariants]\label{g-eq}
  Let $X$, $Y$, $\pi$, $\mathcal{F}$ and $G$ be as above and suppose $S$ is another normal variety over $\mathbb{C}$ which fits into the following diagram (with $\pi$ and $\tau$ both $G$-invariant, $\tau$ and $\overline{\tau}$ both flat and projective):
  \begin{center}
    \begin{tikzcd}
      X \arrow[d,,"\pi"] \arrow[dr,,"\tau"]\\
      Y \arrow[r,,"\overline{\tau}"] & S
    \end{tikzcd}
  \end{center}
  Then we can calculate higher direct images of $\pi^G_*\mathcal{F}$ along $\overline{\tau}$ by taking invariants of the corresponding higher direct images along $\tau$ upstairs:
  $$
  R^i\overline{\tau}_*(\pi^G_*\mathcal{F}) \cong (R^i\tau_*\mathcal{F})^G
  $$
\end{proposition}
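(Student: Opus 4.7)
The plan is to combine the Leray spectral sequence for the factorization $\tau = \overline{\tau}\circ\pi$ with the exactness of the $G$-invariants functor in characteristic zero. In the intended application (and as the proposition is really designed to cover) $\pi$ is the quotient map $X \to X/G$, hence a finite — and therefore affine — morphism. So $R^q\pi_*\mathcal{F} = 0$ for $q>0$ on any quasi-coherent $\mathcal{F}$, and the Leray spectral sequence for $\overline{\tau}\circ\pi$ degenerates at $E_2$, yielding a $G$-equivariant isomorphism
$$
R^i\overline{\tau}_*(\pi_*\mathcal{F}) \;\cong\; R^i\tau_*\mathcal{F}.
$$
Here the $G$-action on the left is inherited from the equivariant structure on $\mathcal{F}$; since $\pi$ is $G$-invariant, both $Y$ and $S$ carry the trivial $G$-action, so $G$ acts purely at the level of sheaves, and functoriality of derived pushforward preserves this action.

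Next I would apply the $G$-invariants functor to both sides. The averaging operator $e := \frac{1}{|G|}\sum_{g \in G}g$ is a natural idempotent endomorphism of the identity on $G$-equivariant quasi-coherent sheaves, realizing $(-)^G$ as a direct summand of the forgetful functor. In particular $(-)^G$ is exact, preserves injectives up to direct summands, and commutes with any additive functor, including $R^i\overline{\tau}_*$. Therefore
$$
\bigl(R^i\tau_*\mathcal{F}\bigr)^G \;\cong\; \bigl(R^i\overline{\tau}_*(\pi_*\mathcal{F})\bigr)^G \;\cong\; R^i\overline{\tau}_*\bigl((\pi_*\mathcal{F})^G\bigr) \;=\; R^i\overline{\tau}_*(\pi^G_*\mathcal{F}),
$$
which is the desired identification.

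The main subtlety to articulate with care is the claim that $(-)^G$ commutes with $R^i\overline{\tau}_*$. One can see this either directly from the direct-summand description above (averaging splits every equivariant sheaf as $\mathcal{G} = \mathcal{G}^G \oplus \ker(e)$, and $R^i\overline{\tau}_*$ preserves direct sums), or by choosing a $G$-equivariant injective resolution of $\pi_*\mathcal{F}$, applying $\overline{\tau}_*$, and observing that exactness of $(-)^G$ lets it pass through the cohomology of the resulting complex. Beyond this, the only other essentially routine point is the affineness of the quotient $\pi$, which is what makes the Leray spectral sequence collapse; flatness and projectivity of $\tau$ and $\overline{\tau}$ play no role in the argument itself and are presumably present only to ensure that these higher direct images behave well under later base change.
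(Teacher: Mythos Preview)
Your proof is correct and follows essentially the same approach as the paper: both use finiteness of $\pi$ to collapse a Leray/Grothendieck spectral sequence and then invoke exactness of $(-)^G$ in characteristic zero to pass invariants through higher direct images. The only cosmetic difference is that the paper packages the commutation of $(-)^G$ with $R^i\overline{\tau}_*$ as a degenerate Grothendieck spectral sequence for the composition $\tau^G_* = \overline{\tau}_* \circ \pi^G_*$, whereas you argue it directly via the averaging idempotent---if anything your formulation is slightly more transparent.
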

\begin{proof}
  Since $G$ is finite and the $G$-modules in which $\pi_*\mathcal{F}$ takes its values are over $\mathbb{C}$-algebras, the invariants functor $(\_)^G$ is exact in our situation (a consequence of Maschke's theorem on complete reducibility of $G$-representations). Therefore as a trivial special case of Grothendieck's spectral sequence (see \cite[Theorem 2.4.1]{toh}) we have that $R^i\tau^G_*\mathcal{F} = (R^i\tau_*\mathcal{F})^G$ for all $i$ (because recall that $\tau^G_* = (\_)^G\circ \tau_*$). Similarly $R^i\pi^G_*\mathcal{F} = (R^i\pi_*\mathcal{F})^G$. Since $\pi$ is finite, $R^i\pi_*\mathcal{F} = 0$ for $i > 0$. Now we note that $\tau^G_* = \overline{\tau}_*\circ \pi^G_*$. We can apply the Grothendieck spectral sequence here too to conclude $R^p\overline{\tau}_*(R^q\pi^G_*\mathcal{F})$ abuts to $R^{p+q}\tau^G_*\mathcal{F} = (R^{p+q}\tau_*\mathcal{F})^G$, but since the higher direct images of $\pi$ vanish, this abutment immediately reduces to the desired isomorphism.
\end{proof}

\begin{remark}
The same result could be achieved in the above proposition with weaker hypotheses on $G$, on the spaces $X$, $Y$ and $S$ and for different fields. However, we will only work with the symmetric group over $\mathbb{C}$.
\end{remark}

\subsection{K\"{u}nneth Formula}\label{kunneth-sec}
Suppose we have the following Cartesian diagram of schemes:
$$
\xymatrix{
X\times_S Y \ar[r] \ar[d] \ar[dr]^{\tau} & Y\ar[d]^{g}\\
X \ar[r]^{f} & S
}
$$
where $X$, $Y$ and $S$ are smooth varieties over $\mathbb{C}$ and where $f$ and $g$ are flat of relative dimensions $m$ and $n$ respectively.

\begin{proposition}[Top degree K\"{u}nneth formula]\label{kunneth}
For $X$, $Y$, $f$, $g$, $\tau$ as above, suppose that $\mathcal{F}$ and $\mathcal{G}$ are locally free sheaves on $X$ and $Y$ respectively. Then:
$$
\xymatrix{
(R^mf_*\mathcal{F})\otimes (R^ng_*\mathcal{G}) \ar[r]^{\hspace{10pt}\cong} & R^{n+m}\tau_*(\mathcal{F}\boxtimes\mathcal{G})
}
$$
\end{proposition}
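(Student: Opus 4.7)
The plan is to upgrade the statement to the derived K\"unneth identity
$$R\tau_*(\mathcal{F}\boxtimes\mathcal{G}) \simeq Rf_*\mathcal{F}\otimes^{L}_{\mathcal{O}_S}Rg_*\mathcal{G}$$
in the derived category of $\mathcal{O}_S$-modules, and then to read off the top cohomology sheaf by exploiting the bounded amplitudes of $Rf_*$ and $Rg_*$.

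For the derived identity, write $p\colon X\times_S Y\to X$ and $q\colon X\times_S Y\to Y$ for the two projections, so that $\mathcal{F}\boxtimes\mathcal{G}=p^*\mathcal{F}\otimes q^*\mathcal{G}$ and $\tau=f\circ p=g\circ q$. Flatness of $g$ yields flat base change $Rq_*\,p^*\mathcal{F}\simeq g^*Rf_*\mathcal{F}$, and local freeness of $\mathcal{G}$ lets me move $q^*\mathcal{G}$ in and out of $Rq_*$ via the projection formula. Combining these,
$$R\tau_*(\mathcal{F}\boxtimes\mathcal{G})=Rg_*\bigl(Rq_*(p^*\mathcal{F}\otimes q^*\mathcal{G})\bigr)\simeq Rg_*\bigl(g^*Rf_*\mathcal{F}\otimes\mathcal{G}\bigr)\simeq Rf_*\mathcal{F}\otimes^{L}_{\mathcal{O}_S}Rg_*\mathcal{G},$$
where the final step is one more application of the projection formula.

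Next, since $f$ is flat of relative dimension $m$ its fibers have dimension $\leq m$, so Grothendieck vanishing on the fibers together with cohomology and base change gives $R^if_*\mathcal{F}=0$ for $i>m$, and similarly $R^jg_*\mathcal{G}=0$ for $j>n$. The question is local on $S$, so I may assume $S$ is affine and replace $Rf_*\mathcal{F}$ and $Rg_*\mathcal{G}$ by quasi-isomorphic bounded complexes of flat $\mathcal{O}_S$-modules $A^\bullet$ and $B^\bullet$ concentrated in degrees $[0,m]$ and $[0,n]$ respectively. With flat representatives the derived tensor product is an honest tensor product of complexes, and the only summand in total degree $m+n$ is the single piece $A^m\otimes B^n$.

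Finally, $H^{m+n}(A^\bullet\otimes B^\bullet)$ is the cokernel of the map $A^{m-1}\otimes B^n\oplus A^m\otimes B^{n-1}\to A^m\otimes B^n$ built from the two differentials, and two applications of the right-exactness of the tensor product identify this cokernel with $H^m(A^\bullet)\otimes H^n(B^\bullet)=R^mf_*\mathcal{F}\otimes R^ng_*\mathcal{G}$, giving the asserted isomorphism. The main technical point requiring care is the existence of the flat replacements in the prescribed cohomological range, so that the derived tensor product collapses correctly in top degree; over an affine base this is standard and can be obtained, for instance, by truncating a \v{C}ech resolution with respect to an affine cover of $X$, and similarly for $Y$.
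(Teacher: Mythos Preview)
Your proof is correct and takes a somewhat different route from the paper's. The paper simply invokes the K\"unneth spectral sequence of EGA~III, 6.7.3--6.7.6: local freeness of $\mathcal{F}$ and $\mathcal{G}$ guarantees convergence to $R^{\bullet}\tau_*(\mathcal{F}\boxtimes\mathcal{G})$, and the term $(R^mf_*\mathcal{F})\otimes(R^ng_*\mathcal{G})$ sits at the extreme corner $(p,q)=(0,-m-n)$ of the fourth-quadrant $E_2$-page, where no differentials enter or leave, so it survives unchanged to $E_\infty$ and is the sole contribution in total degree $m+n$. Your argument unpacks the same mechanism without the spectral-sequence packaging: the derived identity $R\tau_*(\mathcal{F}\boxtimes\mathcal{G})\simeq Rf_*\mathcal{F}\otimes^{L}Rg_*\mathcal{G}$ (obtained via flat base change and the projection formula) is exactly what that spectral sequence encodes, and your explicit cokernel computation in top degree is the ``corner term survives'' observation carried out by hand on bounded flat representatives. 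The paper's version is terser because it outsources the work to an EGA citation; yours is more self-contained and makes the derived-categorical structure visible, which dovetails nicely with how the result is used immediately afterward when passing to $\mathfrak{S}_k$-invariants.
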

\begin{proof}
By \cite[6.7.6]{ega} the local freeness of $\mathcal{F}$ and $\mathcal{G}$ means that the \textit{K\"{u}nneth spectral sequence} (see \cite[6.7.3(a)]{ega}) computes $R^{n+m}\tau_*(\mathcal{F}\boxtimes \mathcal{G})$. This spectral sequence lies in the fourth quadrant and has $(R^mf_*\mathcal{F})\otimes (R^ng_*\mathcal{G})$ in the bottom-left corner (position $(p,q) = (0,-m-n)$). The result follows from convergence of this corner term.
\end{proof}

Letting $Y = X^{\times (k-1)}$ (the $(k-1)$-fold product of $X$), $g = f^{\times (k-1)}$ and $\mathcal{G} = \mathcal{F}^{\boxtimes (k-1)}$ in the above proposition we get, by induction, an isomorphism $\phi: (R^mf_*\mathcal{F})^{\otimes k} \rightarrow R^{km}\tau_*(\mathcal{F}^{\boxtimes k})$. We will now consider two natural actions of the symmetric group here, one including a twist by the sign homomorphism and the other not. To cover both cases simultaneously, we will let the symbol $\mu$ stand for either $0$ or $1$ and consider the symmetric group $\mathfrak{S}_k$ to act by $(-1)^{\mu}$ times the natural permutation action on $\mathcal{F}^{\boxtimes k}$. This induces an action on $R^{km}\tau_*(\mathcal{F}^{\boxtimes k})$. Letting $\mathfrak{S}_k$ then act on $(R^mf_*\mathcal{F})^{\otimes k}$ by $(-1)^{m+\mu}$ (to account for skew-symmetry of odd-degree cohomology) times the natural permutation action makes $\phi$ an \textit{equivariant} map. Hence we have:

\begin{proposition}[Equivariant K\"{u}nneth]\label{eq-kunneth}
Letting $\mathfrak{S}_k$ act as indicated above, $\phi$ restricts to an isomorphism between the invariant subsheaves:
\begin{align*}
\xymatrix{
\wedge^kR^mf_*\mathcal{F} \ar[r]^{\hspace{-10pt}\cong} & R^{km}\tau_*(\mathcal{F}^{\boxtimes k})^{\mathfrak{S}_k}} & \hspace{20pt}\text{for $m + \mu$ odd}\\
\xymatrix{S^kR^mf_*\mathcal{F}\ar[r]^{\hspace{-10pt}\cong} & R^{km}\tau_*(\mathcal{F}^{\boxtimes k})^{\mathfrak{S}_k}} & \hspace{20pt}\text{for $m + \mu$ even}
\end{align*}
\end{proposition}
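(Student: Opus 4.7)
The plan is to apply the functor of $\mathfrak{S}_k$-invariants to the equivariant isomorphism $\phi$ already constructed in the paragraph preceding the statement. Since $\mathfrak{S}_k$ is finite and we are working over $\mathbb{C}$, Maschke's theorem guarantees that $(-)^{\mathfrak{S}_k}$ is exact on quasi-coherent sheaves (this is exactly the kind of argument invoked in Proposition \ref{g-eq}). In particular it preserves the isomorphism, so one immediately obtains
\[
\bigl((R^m f_*\mathcal{F})^{\otimes k}\bigr)^{\mathfrak{S}_k} \;\cong\; R^{km}\tau_*(\mathcal{F}^{\boxtimes k})^{\mathfrak{S}_k},
\]
where the action on the left is by $\mathrm{sgn}(\sigma)^{m+\mu}$ times the natural permutation action.

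Next, I would identify the left-hand invariant sheaf via the standard isotypic decomposition of $V^{\otimes k}$ under $\mathfrak{S}_k$. Globally, using the $\mathscr{O}_S$-linear projectors $\tfrac{1}{k!}\sum_\sigma \sigma$ and $\tfrac{1}{k!}\sum_\sigma \mathrm{sgn}(\sigma)\sigma$, the trivial isotypic component of $(R^m f_*\mathcal{F})^{\otimes k}$ is $S^k R^m f_*\mathcal{F}$ and the sign isotypic component is $\wedge^k R^m f_*\mathcal{F}$. When $m+\mu$ is even, the twist $\mathrm{sgn}(\sigma)^{m+\mu}$ is trivial and the invariants are $S^k R^m f_*\mathcal{F}$; when $m+\mu$ is odd, the twist equals $\mathrm{sgn}(\sigma)$ and the invariants are $\wedge^k R^m f_*\mathcal{F}$. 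Combining with the previous display gives the two formulas claimed.

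The only place where one has to exercise care is not in the proof of the proposition itself but in the parity bookkeeping leading up to its statement. The equivariance of $\phi$ rests on the Koszul sign rule: swapping two factors of a sheaf of degree $m$ contributes a sign $(-1)^{m^2} = (-1)^m$, so permuting all $k$ factors by $\sigma$ contributes $\mathrm{sgn}(\sigma)^m$, and this is precisely what forces the shift from $\mu$ on the right side to $m+\mu$ on the left. Once this sign is in hand (and the text hands it to us just before the statement), the proposition follows immediately from the two ingredients above: exactness of $(-)^{\mathfrak{S}_k}$ and the elementary identification of the trivial and sign isotypic components of $V^{\otimes k}$.
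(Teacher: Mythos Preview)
Your proposal is correct and matches the paper's approach: the paper offers no separate proof of this proposition, treating it as an immediate consequence of the equivariance of $\phi$ established in the preceding paragraph, and your write-up simply makes explicit the two standard steps (exactness of $(-)^{\mathfrak{S}_k}$ in characteristic zero, and identification of the trivial/sign isotypic components of a tensor power) that the paper leaves to the reader.
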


\subsection{Brill-Noether loci and Petri-general curves}\label{bn-petri}

\noindent Let $C$ be a smooth projective curve over $\mathbb{C}$.\\
\\
\indent In the remainder of this paper we will often refer to the classical \textit{Brill-Noether loci}
$$
W^r_d(C) := \{L \in \text{Pic}^d(C) : h^0(C,L) \geq r + 1\}
$$
and their natural desingularizations,
$$
G^r_d(C) := \{V \subset H^0(C,L) : L \in \text{Pic}^d(C),\quad\text{dim }V = r + 1\}
$$
which we will refer to here as the \textit{Brill-Noether varieties}. Both are defined as schemes in \cite[Ch. 4, \S 3]{acgh1}.\\
\\
\indent To state our results below in the appropriate generality, we will also need the following:
\begin{definition}\label{petri-map}
  We say $C$ is a \textit{Petri curve} if for all line bundles $L$ on $C$, the Petri-map:
  $$
\mu_0 : H^0(L)\otimes H^0(K_C-L) \rightarrow H^0(K_C)
$$
(given by multiplication of sections) is \textit{injective}.
\end{definition}
\noindent It is a celebrated theorem of Gieseker (\cite[Theorem 1.1]{gieseker}) that a general curve (in the sense of moduli) is a Petri curve. Without further reference, we will say \textit{Petri-general curve} to mean any curve in an open subset of the set (in moduli) of Petri curves.\\
\\
\indent Now, in order to count components of our divisor varieties below, we would like to know the size of the set
\begin{align*}
  \mathcal{R}_d &:= \{r \in \mathbb{Z}_{\geq 0} : h^0(L) = r + 1\text{
                  for some }L \in \text{Pic}^d(C)\}\\
  \\
  & = \{r \in \mathbb{Z}_{\geq 0} : W^r_d\setminus W^{r+1}_d \not= \varnothing\}
\end{align*}
Recall that on a Petri-general curve $C$ the existence and dimension
theorems for $g^r_d$'s (\cite[p. 206]{acgh1} and \cite[p. 214]{acgh1})
imply that $W^r_d$ is non-empty of dimension $\text{min}\{g,\rho(g,r,d)\}$ if and only
if the Brill-Noether number $\rho(g,d,r)$ (see \cite[p. 159]{acgh1})
is non-negative. So, by these facts and the above definition, one can
determine that the loci $W^r_d$
are distinct for different values of $r$ in a certain range: for $r_d := \text{min }\mathcal{R}_d$ and $R_d := \text{max }\mathcal{R}_d$ their count is
\begin{equation}\label{wrd-count}
|\mathcal{R}_d| = R_d - r_d + 1
\end{equation}
with
 \begin{align*}
   R_d & = \left\lfloor \frac{d - g - 1 + \sqrt{(d - g - 1)^2 +
         4d}}{2} \right\rfloor\\
   \\
   r_d & =  \left\{\begin{array}{cl}d-g& \text{for }d>g\\\\0& \text{for }d \leq g\end{array}\right.
\end{align*}
(here $R_d$ is the floor of the largest root of the Brill-Noether number
$\rho(g,d,r)$ thought of as a polynomial in $r$).\\
\\
In particular here, we note that $|\mathcal{R}_d|$ is approximately \textit{linear} in $d$ (for fixed genus).

\begin{remark}
  Although not
immediately obvious from this formula, it is not hard to
see that for $d \geq 2g - 1$ we always have $|\mathcal{R}_d| = 1$, as
expected from Riemann-Roch which in that case implies $\mathcal{R}_d
= \{d - g\}$.
\end{remark}

\subsection{Subspace varieties and their desingularizations}\label{lin-alg}

\noindent Let $V$ be a vector space over $\mathbb{C}$ and recall the \textit{coproduct} map $\wedge^{\bullet}V \rightarrow \wedge^{\bullet}V\otimes \wedge^{\bullet}V$ from multilinear algebra (see e.g. \cite[p. 3]{weyman}). After projecting, this yields:
\begin{center}
\begin{tikzcd}[row sep = tiny]
\Delta : & [-25pt] \wedge^kV \arrow[r,,] & \wedge^{k-1}V\otimes V\\
& u_1\wedge \cdots \wedge u_k \arrow[r,mapsto] & \sum (-1)^{\text{sgn}(\sigma)}u_{\sigma(1)}\wedge \cdots \wedge u_{\sigma(k-1)}\otimes u_{\sigma(k)}
\end{tikzcd}
\end{center}
where the sum is taken over all permutations $\sigma \in \mathfrak{S}_k$ such that $\sigma(1) < \cdots < \sigma(k-1)$.\\
\\
\indent We now introduce the notion of the \textit{enclosing space} $\text{Enc}(\eta)$ of a $k$-vector $\eta \in \wedge^kV$:

\begin{definition}\label{enc}
  Let $V$ be a vector space, $\eta \in \wedge^kV$ and $\Phi \in S^kV$. We define the \textit{enclosing spaces} $\text{Enc}(\eta)$ and $\text{Enc}(\Phi)$ to be the smallest subspaces $U$ and $U'$ respectively such that $\eta \in \wedge^kU \subset \wedge^kV$, and similarly for $\Phi$ and $U'$. We will denote the dimensions of these enclosing spaces by $\text{enc}(\eta)$ and $\text{enc}(\Phi)$.\\
  \\
  \indent Equivalently (see e.g. \cite[p. 210-211]{griffhar2}), we have the algebraic definition that $\text{Enc}(\eta)$ and $\text{Enc}(\Phi)$ are the images, respectively, of the following contraction maps:
\begin{align*}
\langle \_\hspace{1pt},\eta\rangle & : \wedge^{k-1}V^* \rightarrow V\\
\langle \_\hspace{1pt},\Phi\rangle & : S^{k-1}V^* \rightarrow V
\end{align*}
Explicitly, the contraction map $\langle \_ , \eta\rangle$ is the composition of the maps
\begin{center}
\begin{tikzcd}
\wedge^{k-1}V^* \arrow{r}{\text{$-\otimes \Delta(\eta)$}} & [2em] \wedge^{k-1}V^*\otimes \wedge^{k-1}V\otimes V \arrow{r}{\text{$t\otimes -$}} & [2em] V
\end{tikzcd}
\end{center}
for $t$ the trace map $\wedge^{k-1}V^*\otimes \wedge^{k-1}V \rightarrow \mathbb{C}$ (respectively for the symmetric powers).
\end{definition}

Given the above definition of enclosing spaces, we are naturally led
to consider the following parameter spaces (note that unless otherwise
indicated, the symbol $\mathbb{P}$ will denote taking projective
\textit{quotients}; projective \textit{subspaces} will be denoted by $\mathbb{P}_{\text{sub}}$):

\begin{definition}\label{subspace-varieties}
Let $V$ be a vector space of dimension $n$ and let $k \leq e \leq \text{dim }V$ some positive integers. We define the (skew-)symmetric \textbf{subspace varieties}:
\begin{align*}
\text{Sub}_e(\wedge^kV) & := \{[\eta]\in \mathbb{P}(\wedge^kV)^{\vee} : \text{enc}(\eta) \leq e\}\\
\text{Sub}_e(S^kV) & := \{[\Phi] \in \mathbb{P}(S^kV)^{\vee} : \text{enc}(\Phi) \leq e\}
\end{align*}
\end{definition}

Note that when $e = k$ in the skew-symmetric case we get the Grassmannian $G(k,V) = \text{Sub}_k(\wedge^kV)$ and when $e = 1$ in the symmetric case we get the Veronese variety $\nu_k(\mathbb{P}V) = \text{Sub}_1(S^kV)$.
\begin{remark}
For $k = 2$ the subspace varieties are \textit{secant varieties} $\text{Sec}_s\hspace{1pt}G(2,V) = \text{Sub}_{2s}(\wedge^2V)$ and $\text{Sec}_s\hspace{1pt}\nu_2(\mathbb{P}V) = \text{Sub}_{2s}(S^2V)$ (for $\nu_2$ the quadratic Veronese mapping). This is because the enclosing dimension of a (skew-) symmetric 2-tensor coincides with the rank of the corresponding (skew-) symmetric matrix.
\end{remark}

\begin{remark}\label{coincidences}
  A priori it is possible that two subspace varieties will coincide for different choices of enclosing dimension $e$:
  \begin{itemize}
  \item For $k = 2$, by the previous remark, we have $\text{Sub}_{2s}(\wedge^2V) = \text{Sub}_{2s+1}(\wedge^2V)$ for $1 \leq s \leq \lfloor n/2\rfloor$.
  \item For $k \geq 3$, these coincidences happen rarely: $\text{Sub}_k(\wedge^kV) = \text{Sub}_{k+1}(\wedge^kV)$ but otherwise $\text{Sub}_e(\wedge^kV) \subsetneq \text{Sub}_{e+1}(\wedge^kV)$ for all $k + 1 \leq e \leq n$.
    \end{itemize}
\end{remark}

Typically the subspace variety $\text{Sub}_e$ will be singular along $\text{Sub}_{e-1}$, but it admits a useful desingularization which is a particular case of a more general construction we briefly outline here: note that the incidence correspondence
  $$
\Psi := \{([\eta],W) \in \mathbb{P}(\wedge^kV)^{\vee} \times G(e,V) : \eta \in \wedge^kW\}
$$
maps surjectively to $\text{Sub}_e(\wedge^kV) \subset \mathbb{P}(\wedge^kV)^{\vee}$. In fact, the fiber over $[\eta]$ is exactly
$$
\{W \in G(e,V) : \text{Enc}(\eta) \subset W\}
$$
hence (when we are not in the situations of Remark \ref{coincidences}) the map is an isomorphism over the open subset $\{[\eta]\in\text{Sub}_e(\wedge^kV) : \text{enc}(\eta) = e\}$ and is thus birational. We note that in fact $\Psi = \mathbb{P}(\wedge^k\mathcal{S})^{\vee}$ for $\mathcal{S}$ the tautological sub-bundle on $G(e,V)$, hence it is a desingularization of $\text{Sub}_e(\wedge^kV)$. A desingularization of $\text{Sub}_e(S^kV)$ can be constructed analogously. These desingularizations immediately imply:

\begin{lemma}\label{dimension}
  For $k \geq 3$, the subspace varieties are irreducible and have dimensions
  \begin{align*}
    \text{\emph{dim}}\left(\text{\emph{Sub}}_e(\wedge^kV)\right) & = e(n - e) + {e\choose k} - 1\\
    \text{\emph{dim}}\left(\text{\emph{Sub}}_e(S^kV)\right) & = e(n-e) + {e + k - 1\choose k} - 1
  \end{align*}
for $n = \text{dim }V$ and $k \leq e \leq n$ except $e = k+1$.
\end{lemma}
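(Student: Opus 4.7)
The plan is to exploit the desingularizations $\Psi = \mathbb{P}(\wedge^k\mathcal{S})^{\vee}$ and $\Psi' = \mathbb{P}(S^k\mathcal{S})^{\vee}$ over the Grassmannian $G(e,V)$ that were just constructed before the lemma statement, and to read off both irreducibility and dimension from the projective-bundle structure once we check that the maps to the subspace varieties are genuinely birational in the range indicated.

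First I would establish irreducibility. Since $\mathcal{S}$ is a rank-$e$ vector bundle on the irreducible variety $G(e,V)$, both $\wedge^k\mathcal{S}$ and $S^k\mathcal{S}$ are vector bundles (of ranks $\binom{e}{k}$ and $\binom{e+k-1}{k}$ respectively), and so $\Psi$ and $\Psi'$ are projective bundles over an irreducible base — hence irreducible. The subspace varieties are the images of these irreducible schemes under the projection to $\mathbb{P}(\wedge^kV)^{\vee}$ (resp. $\mathbb{P}(S^kV)^{\vee}$), so they inherit irreducibility.

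For the dimension formulas I would use that the dimension of a projective bundle is the sum of the base and fiber dimensions. This gives
\[
\dim \Psi \;=\; \dim G(e,V) + \dim \mathbb{P}(\wedge^kW)^{\vee} \;=\; e(n-e) + \binom{e}{k} - 1
\]
for $W \in G(e,V)$, and similarly $\dim \Psi' = e(n-e) + \binom{e+k-1}{k} - 1$. So it only remains to argue that the birational map from $\Psi$ (resp. $\Psi'$) to the corresponding subspace variety has image of the same dimension, i.e.\ that the map is actually generically finite — in fact birational. This was essentially observed in the paragraph preceding the lemma: the fiber over $[\eta]$ consists of those $W$ containing $\text{Enc}(\eta)$, which is a single point whenever $\text{enc}(\eta) = e$. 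One therefore has to know that the locus of $\eta$ with enclosing dimension \emph{exactly} $e$ is nonempty in $\text{Sub}_e$, equivalently that $\text{Sub}_{e-1} \subsetneq \text{Sub}_e$.

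The one subtle point — and the only real obstacle — is cleanly disposing of the exceptional values of $e$ using Remark \ref{coincidences}. For $k \geq 3$, the only coincidence on the skew-symmetric side is $\text{Sub}_k(\wedge^kV) = \text{Sub}_{k+1}(\wedge^kV)$, which is precisely the excluded $e = k+1$; for all other $e$ in the range $k \leq e \leq n$ (including $e = k$, where $\text{Sub}_k(\wedge^kV) = G(k,V)$ and the formula correctly returns $k(n-k)$), strict containment $\text{Sub}_{e-1} \subsetneq \text{Sub}_e$ holds and birationality follows. The symmetric case is handled identically, with the role of the Grassmannian fiber replaced by the symmetric power; the analogous coincidence analysis (or a direct enclosing-dimension count using the contraction map $\langle\,\_\,,\Phi\rangle$ from Definition \ref{enc}) shows that the general $\Phi \in \text{Sub}_e(S^kV)$ has $\text{enc}(\Phi) = e$ in the same range, so the projection from $\Psi'$ is again birational and the dimension formula follows.
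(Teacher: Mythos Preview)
Your proposal is correct and follows essentially the same approach as the paper: the paper simply declares that the desingularizations ``immediately imply'' the lemma, and you are spelling out exactly that implication---irreducibility from the projective-bundle structure, dimension from the fiber-plus-base count, and birationality from the fiber description together with Remark~\ref{coincidences}. The only small point worth tightening is the symmetric case: Remark~\ref{coincidences} as stated covers only $\wedge^kV$, so your parenthetical appeal to a direct contraction-map argument for $S^kV$ is doing real work and could be made explicit (e.g.\ a generic degree-$k$ form in $e$ variables has partials spanning the full space), but this is a minor gap in exposition rather than in the argument.
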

The analogous irreducibility statement follows by the same argument in the case $k = 2$, but the dimension is calculated differently since the subspace varieties in that case are defective secant varieties, so the incidence correspondence above is no longer a desingularization.

\begin{lemma}\label{sec-dim}
  The subspace varieties $\text{Sub}_{2s}(\wedge^2V) = \text{Sec}_s\hspace{1pt}G(2,V)$ and $\text{Sub}_{2s}(S^2V) = \text{Sec}_s\hspace{1pt}\nu_2(\mathbb{P}V)$ are irreducible of dimensions
  \begin{align*}
    \text{\emph{dim}}\left(\text{\emph{Sec}}_s\hspace{1pt}G(2,V)\right) & = \text{\emph{min}}\left\{{n\choose 2} - 1, \hspace{5pt}2(n-2)s + s - 1\right\} - 2s(s-1)\\
    \text{\emph{dim}}\left(\text{\emph{Sec}}_s\hspace{1pt}\nu_2(\mathbb{P}V)\right) & = \text{\emph{min}}\left\{{n + 1\choose 2} - 1, {s + 1\choose 2} + s(n - s) - 1\right\}
  \end{align*}
\end{lemma}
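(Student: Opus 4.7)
The plan is to identify the two subspace varieties with classical matrix-rank loci and then deduce both irreducibility and the dimension formulas from a standard parametrization. As noted in the preceding remark, for $k = 2$ the enclosing dimension of a $2$-tensor coincides with the rank of the corresponding (skew-)symmetric matrix, so $\text{Sub}_{2s}(\wedge^2V)$ is the projectivization of the affine variety $\mathcal{R}^-_{2s}$ of skew-symmetric $n\times n$ matrices of rank at most $2s$, and $\text{Sec}_s\nu_2(\mathbb{P}V)$ is the projectivization of the variety $\mathcal{R}^+_s$ of symmetric $n\times n$ matrices of rank at most $s$.

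First I would write down the surjective parametrizations
$$
\phi^-:\text{Mat}_{n,2s}(\mathbb{C})\longrightarrow \mathcal{R}^-_{2s},\ B\mapsto BJB^T,\qquad \phi^+:\text{Mat}_{n,s}(\mathbb{C})\longrightarrow \mathcal{R}^+_s,\ B\mapsto BB^T,
$$
where $J$ is a fixed nondegenerate skew form on $\mathbb{C}^{2s}$. Surjectivity is immediate from the Pfaff and Sylvester normal forms for (skew-)symmetric bilinear forms of bounded rank. Since each source is an affine space, the images $\mathcal{R}^\pm$ are irreducible, and so are their projectivizations.

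Next, I would compute the dimensions by a fiber-dimension count over a generic point of maximal rank in the image. For $\phi^+$, if $BB^T = B'B'^T = A$ with $A$ of full rank $s$, then $B$ and $B'$ both have full column rank $s$ and share the column space of $A$, so $B' = BU$ for some $U\in\text{GL}(s)$; the equation $BUU^TB^T = BB^T$ then forces $UU^T = I_s$, i.e. $U\in O(s,\mathbb{C})$, of dimension $\binom{s}{2}$. The analogous argument for $\phi^-$ gives generic fiber $\text{Sp}(2s)$, of dimension $s(2s+1)$. Semicontinuity of fiber dimension yields
$$
\dim \mathcal{R}^+_s = sn - \tbinom{s}{2},\qquad \dim\mathcal{R}^-_{2s} = 2sn - s(2s+1),
$$
and subtracting $1$ for projectivization, then rearranging, recovers the summands $\binom{s+1}{2} + s(n-s) - 1$ and $2(n-2)s + s - 1 - 2s(s-1)$ appearing in the statement. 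The $\min$ against the ambient dimensions $\binom{n+1}{2}-1$ and $\binom{n}{2}-1$ records the boundary cases in which $s$ (resp. $2s$) reaches the generic rank and the rank locus fills its ambient projective space.

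The only real subtlety will be the identification of the generic fiber with the orthogonal (resp. symplectic) group. Once one observes that full column rank forces any two factorizations $B$, $B'$ of the same matrix $A$ to differ by an invertible $s\times s$ change of basis $U$, the defining equation collapses to $UU^T = I_s$ (resp. $UJU^T = J$), and the rest is routine linear algebra; no further geometric input is needed.
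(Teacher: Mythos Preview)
Your argument is correct and self-contained, whereas the paper's own proof of this lemma consists entirely of citations to \cite{cgg} and to Landsberg's book for the dimension formulas, with irreducibility having been handled just before the statement via the incidence-correspondence desingularization over the Grassmannian. So the two approaches are genuinely different.

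Your route identifies the loci with classical determinantal varieties of (skew-)symmetric matrices and uses the surjective parametrizations $B\mapsto BB^T$ and $B\mapsto BJB^T$ from affine space to get irreducibility for free and to compute dimensions by generic fiber counts over $O(s,\mathbb{C})$ and $\mathrm{Sp}(2s,\mathbb{C})$. This is more elementary and makes the defect of the secant varieties completely transparent: the ``missing'' dimensions are exactly $\dim O(s)$ and $\dim \mathrm{Sp}(2s)$. The paper's incidence-correspondence approach, by contrast, is uniform with the $k\geq 3$ case treated in Lemma~\ref{dimension} and connects naturally to the relative desingularizations built later over $G^{e-1}_d$; but it does not by itself yield the dimension in the $k=2$ case (the map from $\Psi$ is no longer birational), which is why the paper defers to the literature. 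One small point: your last sentence about the $\min$ is slightly imprecise in the skew case, since the $-2s(s-1)$ in the stated formula sits outside the minimum; the formula as written is only meant for the range $2s\leq n$, where your computation and the stated expression agree on the nose.
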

\begin{proof}
See \cite{cgg} and \cite[p. 125]{land} respectively for the dimension calculations of the defective secant varieties.
\end{proof}

\begin{remark}\label{kempf-res}
One can apply \cite[Prop. 1 and Thm. 3]{kempf2} to the
map $\Psi \rightarrow \text{Sub}_e(\wedge^kV)$ to conclude that the
subspace variety $\text{Sub}_e(\wedge^kV)$ is normal and
Cohen-Macaulay, \textit{and} - in the case $k\geq 3$ when this map is
birational - it also has rational singularities.
\end{remark}

\section{Symmetric Products}\label{symprod}
\noindent Let $C$ be a smooth projective curve over $\mathbb{C}$.

\subsection{The bundles of interest on $C_k$}
Let $C^k$ and $C_k$ be the $k^{\text{th}}$ direct and symmetric products of $C$, respectively. For $L$ a line bundle on $C$ we can form on $C^k$ the associated rank-$k$
vector bundle $L^{\boxplus k} := \bigoplus p_i^*L$ and its determinant
$L^{\boxtimes k} := \bigotimes p_i^*L$ for $p_i$ the projections. Now
consider the following commutative diagram:

\begin{equation}\label{diag1}
  \begin{tikzcd}
    C\times C_k & & C^k \arrow[dd,"\pi"]\arrow[dl,"\pi_i"]\\
    \mathcal{D} \arrow[u,symbol=\subseteq]\arrow[drr,"q"']
    \arrow[d,"p"'] \arrow[r,symbol=\cong] & C\times C_{k-1} \\
    C & & C_k
  \end{tikzcd}
\end{equation}
where $\pi$ denotes the quotient map, $p$ and $q$ the restrictions of
the projection maps on $C\times C_k$, and $\pi_i$ denotes the map $(x_1,\ldots,x_k) \mapsto
(x_i\hspace{1pt},\hspace{1pt}x_1 + \cdots + \widehat{x_i} + \cdots +
x_k)$. Here $\mathcal{D}$ denotes the \textit{universal degree $k$ divisor on
$C$} which is easily seen to be isomorphic to $C\times C_{k-1}$. We
make the following definition:

\begin{definition}\label{taut-defs}
On $C_k$ we first define $E_L := q_*p^*L$ and from this we define the
determinant bundle $N_L := \text{det}(E_L)$. We
also define $T_L$ to be the symmetrization (see section \ref{gp-acts})
of $L^{\boxtimes k}$ (for $\mathfrak{S}_k$ acting by the natural
permutation action) i.e. $T_L := \pi_*^{\mathfrak{S}_k}L^{\boxtimes k}$.
\end{definition}

\noindent In the literature, $E_L$ is often also denoted $L^{[k]}$ and
referred to as the \textit{tautological rank-$k$ bundle associated to
  $L$} on $\text{Hilb}^k(C) \cong C_k$. It is well-known both that $K_{C_k} = N_{K_C}$ and that $H^0(C_k,N_L)$
and $H^0(C_k,T_L)$ are isomorphic to $\wedge^kH^0(C,L)$ (see \cite[Ch. 5]{an}) and $S^kH^0(C,L)$ respectively.

\begin{remark}
The line bundles $T_L$ and $N_L$ are the bundles of chief interest to
us on $C_k$ and they satisfy the relation $N_L \cong T_L(-\Delta/2)$ for $\Delta \subset C_k$ the
(big) diagonal.\footnote{Here $\Delta \subset C_k$ is the image under
  $\pi$ of the big diagonal in $C^k$. In particular, it is the branch locus of $\pi$. For $f :
X\rightarrow Y$ finite between smooth $X$ and $Y$, the natural
pullback morphism $\mathscr{O}_Y \rightarrow f_*\mathscr{O}_X$ is
\textit{split} by $1/d$ times the trace map (for $d =
\text{deg}(f)$). The dual $E_{f}$ of the remaining summand of
$f_*\mathscr{O}_X$ (known as the \textit{Tschirnhausen bundle} of $f$) has determinant $L_{f} := \text{det}(E_{f})$ which
squares to the normal bundle of the branch locus of $f$ though need
not be effective itself. So
$L_{\pi}$ is what we really mean by $\Delta/2$.} We put $N_L$ and
$T_L$ (respectively, families thereof) on a more equal footing with respect to symmetrization via the following:
\end{remark}

\begin{proposition}
Let $S$ be any normal variety. For $\pi : C^k\times S \rightarrow C_k\times S$ the quotient map by the symmetric group $\mathfrak{S}_k$ and $\mathscr{L}$ any line bundle on $C\times S$, we have
$$
\pi^{\mathfrak{S}_k}_*\mathscr{L}^{\boxtimes k} \cong \left\{\begin{array}{cl} T_{\mathscr{L}} & \mu = 0\\ N_{\mathscr{L}} & \mu = 1\end{array}\right.
$$
for $\mathfrak{S}_k$ acting on $\mathscr{L}^{\boxtimes k}$ by $(-1)^{\mu}$ times the natural permutation action.
\end{proposition}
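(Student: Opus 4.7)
For $\mu = 0$, the statement is essentially Definition \ref{taut-defs} extended to the relative setting over $S$: the quotient map $\pi : C^k \times S \to C_k \times S$ remains finite and flat (since $C^k \to C_k$ is flat between smooth varieties of the same dimension by miracle flatness, and flatness is preserved under base change), so $T_\mathscr{L} := \pi_*^{\mathfrak{S}_k}(\mathscr{L}^{\boxtimes k})$ may be defined by symmetrization exactly as before.

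The heart of the matter is the $\mu = 1$ case. My approach breaks into three steps. First, I would observe that $\mathscr{L}^{\boxtimes k}$ with its natural $\mathfrak{S}_k$-action descends to $C_k \times S$: at any fixed point of $C^k \times S$, the stabilizer permutes \emph{equal} tensor factors and so acts trivially on the fiber of $\mathscr{L}^{\boxtimes k}$. By uniqueness of descent and the definition of $T_\mathscr{L}$, this produces a canonical $\mathfrak{S}_k$-equivariant isomorphism $\pi^* T_\mathscr{L} \cong \mathscr{L}^{\boxtimes k}$ with natural (untwisted) action on each side. Second, the projection formula for the finite flat map $\pi$ yields an $\mathfrak{S}_k$-equivariant isomorphism
\[
\pi_* \mathscr{L}^{\boxtimes k} \;\cong\; T_\mathscr{L} \otimes \pi_* \mathscr{O}_{C^k \times S},
\]
and passing to sign-isotypic components (an exact operation in characteristic zero by Maschke, as in the proof of Proposition \ref{g-eq}) gives
\[
\pi_*^{\mathfrak{S}_k,\,\mathrm{sgn}}(\mathscr{L}^{\boxtimes k}) \;\cong\; T_\mathscr{L} \otimes \pi_*^{\mathfrak{S}_k,\,\mathrm{sgn}}(\mathscr{O}_{C^k \times S}).
\]

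Third, I would invoke the classical identification $\pi_*^{\mathfrak{S}_k,\,\mathrm{sgn}}(\mathscr{O}_{C^k\times S}) \cong L_\pi^{-1} \cong \mathscr{O}(-\Delta/2)$: the Vandermonde $\prod_{i<j}(x_i - x_j)$ is a local generator of the sign-isotypic component, and its square is the discriminant whose zero divisor downstairs is the branch locus $\Delta$. Combined with the relation $N_\mathscr{L} = T_\mathscr{L}(-\Delta/2)$ from the preceding remark, the desired isomorphism follows.

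The main obstacle, and where the most care is needed, is the third step: ensuring that $(\pi_*\mathscr{O})^{\mathrm{sgn}}$ really matches the specific half-discriminant line bundle $L_\pi^{-1}$ featured in the footnote of the remark, and that this whole identification behaves correctly in families over $S$. An appealing alternative which sidesteps this subtlety is to apply Proposition \ref{an-lemma} directly to the evaluation map $u : \pi^* E_\mathscr{L} \to \mathscr{L}^{\boxplus k}$: this is an $\mathfrak{S}_k$-equivariant map of rank-$k$ bundles (with permutation action on the target) which is generically an isomorphism and drops rank to first order along each pairwise diagonal, so $\det u$ produces an $\mathfrak{S}_k$-equivariant isomorphism identifying $\pi^* N_\mathscr{L}$ with an appropriately sign-twisted version of $\mathscr{L}^{\boxtimes k}(-\widetilde{\Delta})$, from which the $\mu = 1$ statement follows on passing to $\mathfrak{S}_k$-invariants.
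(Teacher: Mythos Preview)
Your ``alternative'' route at the end---applying Proposition~\ref{an-lemma} to the evaluation map $\pi^*E_{\mathscr{L}}\to\mathscr{L}^{\boxplus k}$---is exactly the paper's argument. The paper then finishes with two steps you leave implicit: it applies $\pi^{\mathfrak{S}_k}_*$ to the resulting equivariant isomorphism $\pi^*N_{\mathscr{L}}\cong\mathscr{L}^{\boxtimes k}(-\Delta)$, invoking $\pi^{\mathfrak{S}_k}_*\pi^*=\mathrm{id}$ (citing \cite{krug}) to obtain $N_{\mathscr{L}}\cong\pi^{\mathfrak{S}_k}_*(\mathscr{L}^{\boxtimes k}(-\Delta))$, and then removes the twist by $(-\Delta)$ via the short exact sequence $0\to\mathscr{L}^{\boxtimes k}(-\Delta)\to\mathscr{L}^{\boxtimes k}\to Q\to 0$, observing that under the sign-twisted action the quotient $Q$ carries the $-\mathrm{id}$ representation and so contributes no invariants.

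Your main approach---descent to get $\pi^*T_{\mathscr{L}}\cong\mathscr{L}^{\boxtimes k}$, projection formula, and the Vandermonde identification $(\pi_*\mathscr{O})^{\mathrm{sgn}}\cong\mathscr{O}(-\Delta/2)$---is a genuinely different and valid route. It trades Proposition~\ref{an-lemma} and the quotient argument for one classical fact about the alternating summand of $\pi_*\mathscr{O}$, and it uses the relation $N_{\mathscr{L}}=T_{\mathscr{L}}(-\Delta/2)$ from the preceding remark as input rather than re-deriving it. The paper's route has the mild advantage of being self-contained: it does not rely on that relation, nor on pinning down which square root of $\mathscr{O}(-\Delta)$ the footnote's $L_\pi$ actually is (the concern you correctly flag). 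Your route is structurally cleaner once those identifications are granted, and it makes the parallel between the $\mu=0$ and $\mu=1$ cases more transparent, since both come from the same decomposition of $\pi_*\mathscr{O}$.
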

\begin{proof}
The claim for $\mu = 0$ is true simply by Definition \ref{taut-defs}. For $\mu
= 1$ we proceed as follows: the natural evaluation map
$\pi^*E_{\mathscr{L}} \rightarrow \mathscr{L}^{\boxplus k}$ on $C^k$
(obtained, e.g. for $S$ a point, by pulling back the evaluation map $q^*E_L \rightarrow p^*L$
along $\pi_i$ in Figure \ref{diag1} and summing over $i$) is $\mathfrak{S}_k$-equivariant, hence by Proposition \ref{an-lemma} we have $\pi^*N_{\mathscr{L}} \cong \mathscr{L}^{\boxtimes k}(-\Delta)$ equivariantly. Since  $\pi^{\mathfrak{S}_k}_*\pi^*$ is the identity for coherent sheaves on $C_k$ (see e.g. \cite[Lemma 2.1]{krug}), we apply $\pi^{\mathfrak{S}_k}_*$ to this isomorphism to get $N_{\mathscr{L}} \cong \pi^{\mathfrak{S}_k}_*(\mathscr{L}^{\boxtimes k}(-\Delta))$. Finally, $\mathscr{L}^{\boxtimes k}(-\Delta)$ is an equivariant subsheaf of $\mathscr{L}^{\boxtimes k}$ for $-1$ times the permutation structure on the latter - the quotient is equipped with the $-\text{id}$ representation, which has no invariants, hence $\pi^{\mathfrak{S}_k}_*(\mathscr{L}^{\boxtimes k}(-\Delta)) \cong \pi^{\mathfrak{S}_k}_*\mathscr{L}^{\boxtimes k}$. This completes the proof.
\end{proof}

\subsection{Picard components}\label{pic-comps} As alluded to near the beginning of section \ref{prelims}, the Picard components $\text{Pic}^{n(d)}(C_k)$ and $\text{Pic}^{t(d)}(C_k)$ are isomorphic to $\text{Pic}^d(C)$ in a natural way - briefly, the isomorphisms and their inverses are:
\begin{center}
  \begin{tikzcd}[row sep = 0.1pt]
    \text{Pic}^d(C) \arrow{r}{\text{$\cong$}} & \text{Pic}^{t(d)}(C_k) \arrow{r}{\text{$\cong$}} & \text{Pic}^{n(d)}(C_k)\\
    L \arrow[r,mapsto] & T_L \arrow[r,mapsto] & T_L(-\Delta/2)\\
    N_L(\Delta/2)\vert_{i(C)} & N_L(\Delta/2) \arrow[l,mapsto] & N_L\arrow[l,mapsto]
  \end{tikzcd} 
\end{center}
where for $D \in C_{k-1}$ any degree $k-1$ divisor on $C$ we define $i : C \rightarrow C_k$ by $p \mapsto p + D$ (the isomorphisms are independent of the choice of $D$). So from now on, without further comment, we will identify these various corresponding Picard components and consider the Brill-Noether \textit{loci} $W^r_d = W^r_d(C)$ as subschemes of $\text{Pic}^{n(d)}(C_k)$ and of $\text{Pic}^{t(d)}(C_k)$ where convenient.

\subsection{Identifying Picard sheaves} We are now ready to state and prove our first main theorem:
\begin{theorem}\label{sheaf-id}
Let $C$ be a smooth projective curve and for $d \in \mathbb{Z}$ denote by $\mathcal{F}_d$ a corresponding Picard sheaf on $\text{Pic}^d(C)$. Then
\begin{align*}
\mathcal{F}_{n(d)} & := \wedge^k\mathcal{F}_d\\
\mathcal{F}_{t(d)} & := S^k\mathcal{F}_d
\end{align*}
are Picard sheaves associated to $C_k$ for the Picard components $\text{Pic}^{n(d)}(C_k)$ and $\text{Pic}^{t(d)}(C_k)$ respectively.
\end{theorem}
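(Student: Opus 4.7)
The plan is to verify the defining property of Picard sheaves directly: for every quasi-coherent $\mathcal{N}$ on $S := \mathrm{Pic}^d(C)$, produce a natural isomorphism
$$\underline{\mathrm{Hom}}(\wedge^k\mathcal{F}_d,\, \mathcal{N}) \;\xrightarrow{\sim}\; \nu_{k,*}\bigl(\mathscr{L}_{n(d)} \otimes \nu_k^*\mathcal{N}\bigr),$$
and its symmetric analogue, where $\nu_k\colon C_k\times S \to S$ is the projection. The unlabelled proposition just before \S\ref{pic-comps} identifies candidate Poincar\'e bundles $\mathscr{L}_{n(d)} := N_{\mathscr{L}_d} = \pi^{\mathfrak{S}_k,\mathrm{sign}}_*(\mathscr{L}_d^{\boxtimes k})$ and $\mathscr{L}_{t(d)} := T_{\mathscr{L}_d} = \pi^{\mathfrak{S}_k}_*(\mathscr{L}_d^{\boxtimes k})$ on $C_k\times S$, for $\pi\colon C^k\times S \to C_k\times S$ the quotient by the natural $\mathfrak{S}_k$-action. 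Their fibers over $[L]\in S$ recover $N_L$ and $T_L$, as required.

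Setting $\tau := \nu_k\circ\pi$, the projection formula for $\pi^{\mathfrak{S}_k}_*$ (which holds over $\mathbb{C}$ by exactness of $(\,\cdot\,)^{\mathfrak{S}_k}$) together with Proposition~\ref{g-eq} applied to the triangle $(\pi,\tau,\nu_k)$ reduces the target to
$$\nu_{k,*}\bigl(\mathscr{L}_{n(d)} \otimes \nu_k^*\mathcal{N}\bigr) \;\cong\; \bigl(\tau_*(\mathscr{L}_d^{\boxtimes k} \otimes \tau^*\mathcal{N})\bigr)^{\mathfrak{S}_k,\mathrm{sign}}.$$
The crux is to establish the $\mathfrak{S}_k$-equivariant identity
$$\tau_*\bigl(\mathscr{L}_d^{\boxtimes k} \otimes \tau^*\mathcal{N}\bigr) \;\cong\; \underline{\mathrm{Hom}}\bigl(\mathcal{F}_d^{\otimes k},\, \mathcal{N}\bigr),$$
natural in $\mathcal{N}$, with $\mathfrak{S}_k$ permuting the tensor factors on the right. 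I would argue by induction on $k$: factor $\tau = \nu^{(k-1)}\circ \pi_k$ where $\pi_k\colon C^k\times S \to C^{k-1}\times S$ forgets the last $C$ and $q_k$ denotes projection to the singled-out $C\times S$. Decomposing $\mathscr{L}_d^{\boxtimes k}\otimes \tau^*\mathcal{N} = \pi_k^*\mathscr{L}_d^{\boxtimes(k-1)} \otimes q_k^*(\mathscr{L}_d \otimes \nu^*\mathcal{N})$ and applying the projection formula for $\pi_k$ together with flat base change along the evident Cartesian square (valid because $\nu^{(k-1)}$ is flat) yields
$$\pi_{k,*}\bigl(\mathscr{L}_d^{\boxtimes k}\otimes \tau^*\mathcal{N}\bigr) \;\cong\; \mathscr{L}_d^{\boxtimes(k-1)} \otimes (\nu^{(k-1)})^*\underline{\mathrm{Hom}}(\mathcal{F}_d,\mathcal{N}),$$
where the Picard property of $\mathcal{F}_d$ identifies $\nu_*(\mathscr{L}_d \otimes \nu^*\mathcal{N}) = \underline{\mathrm{Hom}}(\mathcal{F}_d,\mathcal{N})$. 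Applying $\nu^{(k-1)}_*$, invoking the inductive hypothesis with $\underline{\mathrm{Hom}}(\mathcal{F}_d,\mathcal{N})$ in place of $\mathcal{N}$, and using Hom-tensor adjunction gives the claim.

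Finally, taking $\mathfrak{S}_k$-invariants of both sides, the characteristic-zero identifications $(\mathcal{F}_d^{\otimes k})^{\mathfrak{S}_k,\mathrm{sign}} = \wedge^k\mathcal{F}_d$ and $(\mathcal{F}_d^{\otimes k})^{\mathfrak{S}_k} = S^k\mathcal{F}_d$, combined with the universal property of Hom, convert the right-hand side into $\underline{\mathrm{Hom}}(\wedge^k\mathcal{F}_d,\mathcal{N})$ and $\underline{\mathrm{Hom}}(S^k\mathcal{F}_d,\mathcal{N})$, completing both the wedge and symmetric cases. I expect the main obstacle to be the equivariance bookkeeping inside the induction: each iterative step singles out a specific $C$-factor and thereby breaks the ambient $\mathfrak{S}_k$-symmetry, so one must verify that the resulting identification is independent of the chosen ordering and carries the expected (sign-twisted, in the wedge case) permutation representation on $\mathcal{F}_d^{\otimes k}$ before taking invariants.
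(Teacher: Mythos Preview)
Your approach is correct and genuinely different from the paper's. The paper does not verify the universal property \eqref{eqn-picard} directly; instead it passes through relative duality (Lemma~\ref{highest-push-pic}) to realize the Picard sheaf as the \emph{top} direct image $R^k\overline{\tau}_*(\widetilde{p}_1^*K_{C_k}\otimes\mathscr{N}^\vee)$, rewrites this using $K_{C_k}=N_{K_C}$ and $N_L=T_L(-\Delta/2)$ as $R^k\overline{\tau}_*(T_{\mathscr{K}})$ (resp.\ $N_{\mathscr{K}}$) with $\mathscr{K}=p_1^*K_C\otimes\mathscr{L}^\vee$, and then applies the equivariant top-degree K\"unneth formula (Proposition~\ref{eq-kunneth}) together with Proposition~\ref{g-eq} to obtain $\wedge^k R^1\nu_*\mathscr{K}=\wedge^k\mathcal{F}_d$ (resp.\ $S^k$). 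The point of working in top degree is that K\"unneth is an isomorphism there regardless of how $h^0(C,L)$ jumps; the preliminaries in \S\ref{kunneth-sec} are set up precisely for this purpose.

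Your argument bypasses both relative duality and K\"unneth by exploiting the universality in \eqref{eqn-picard} over \emph{all} quasi-coherent $\mathcal{N}$: the induction feeds $\underline{\mathrm{Hom}}(\mathcal{F}_d,\mathcal{N})$ back into the Picard property for $\mathcal{F}_d$, and Hom--tensor adjunction assembles the answer. This is more elementary and makes no appeal to spectral sequences, at the cost of the equivariance bookkeeping you flag --- the iterated construction singles out factors one at a time, so the resulting identification with $\underline{\mathrm{Hom}}(\mathcal{F}_d^{\otimes k},\mathcal{N})$ is not manifestly $\mathfrak{S}_k$-equivariant and must be checked (e.g.\ on adjacent transpositions). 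The paper's route packages that equivariance once and for all in Proposition~\ref{eq-kunneth}, where the sign twist from odd-degree cohomology is what produces the swap between $\wedge^k$ and $S^k$ for $N_L$ versus $T_L$.
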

\noindent Before proving the theorem, we need the following lemma:
\begin{lemma}\label{highest-push-pic}
For $X$ a smooth projective variety of dimension $n$, $\mathscr{L}_{\lambda}$ a universal line bundle on $X\times \text{Pic}^{\lambda}(X)$ and $p : X\times \text{Pic}^{\lambda}(X) \rightarrow X$ and $\nu : X\times \text{Pic}^{\lambda}(X) \rightarrow \text{Pic}^{\lambda}(X)$ the projections, we have that the highest direct image $R^n\nu_*(p^*K_X\otimes \mathscr{L}_{\lambda}^{\vee})$ satisfies equation \ref{eqn-picard} and is thus a Picard sheaf.
\end{lemma}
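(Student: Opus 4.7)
The plan is to deduce this from relative Grothendieck--Serre duality applied to the smooth projective projection $\nu : X \times \text{Pic}^\lambda(X) \to \text{Pic}^\lambda(X)$, together with a small spectral sequence argument to extract the ordinary $\underline{\text{Hom}}$ sheaf from the derived Hom. First I note that since $X \times \text{Pic}^\lambda(X)$ is a product, the relative dualizing sheaf is simply $\omega_\nu = p^*K_X$, and $\nu$ is smooth proper of relative dimension $n := \dim X$. Grothendieck duality therefore yields, for any coherent $\mathcal{E}$ on the total space and any quasi-coherent $\mathcal{N}$ on $\text{Pic}^\lambda(X)$, a natural isomorphism
$$R\nu_* R\underline{\text{Hom}}(\mathcal{E},\, \nu^*\mathcal{N} \otimes p^*K_X[n]) \;\cong\; R\underline{\text{Hom}}(R\nu_*\mathcal{E},\, \mathcal{N}).$$

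Next I would specialize to $\mathcal{E} = p^*K_X \otimes \mathscr{L}_\lambda^\vee$. The inner Hom on the left collapses to $\mathscr{L}_\lambda \otimes \nu^*\mathcal{N}$ because the two copies of $p^*K_X$ cancel and $\mathscr{L}_\lambda^\vee$ is invertible, turning the displayed isomorphism into
$$R\nu_*(\mathscr{L}_\lambda \otimes \nu^*\mathcal{N})[n] \;\cong\; R\underline{\text{Hom}}\bigl(R\nu_*(p^*K_X \otimes \mathscr{L}_\lambda^\vee),\, \mathcal{N}\bigr).$$
Taking $H^{-n}$ of both sides, the left-hand side returns exactly $\nu_*(\mathscr{L}_\lambda \otimes \nu^*\mathcal{N})$, which is the functor of $\mathcal{N}$ that we need to represent.

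For the right-hand side I would use that $R\nu_*(p^*K_X \otimes \mathscr{L}_\lambda^\vee)$ has cohomology concentrated in degrees $[0,n]$ (since $\nu$ has relative dimension $n$) and invoke the local hyper-Ext spectral sequence
$$E_2^{p,q} = \underline{\text{Ext}}^p\bigl(H^{-q}(R\nu_*(p^*K_X \otimes \mathscr{L}_\lambda^\vee)),\,\mathcal{N}\bigr) \Longrightarrow \underline{\text{Ext}}^{p+q}\bigl(R\nu_*(p^*K_X \otimes \mathscr{L}_\lambda^\vee),\,\mathcal{N}\bigr).$$
For the $p+q = -n$ abutment, one needs $p \geq 0$ (Ext vanishes in negative degrees) and $-q \in [0,n]$ (otherwise $H^{-q}$ vanishes); these constraints force $(p,q) = (0,-n)$, so the only contributing term is $E_2^{0,-n} = \underline{\text{Hom}}(R^n\nu_*(p^*K_X \otimes \mathscr{L}_\lambda^\vee),\,\mathcal{N})$. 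Combining the two computations produces the isomorphism (\ref{eqn-picard}) that characterizes a Picard sheaf, and naturality in $\mathcal{N}$ is inherited from that of Grothendieck duality.

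The only genuinely delicate point is bookkeeping: one must align the cohomological shift built into Grothendieck's $\nu^! = \nu^*(-)\otimes \omega_\nu[n]$ with the degree in which one extracts cohomology, and then verify that the hyper-Ext spectral sequence really does collapse to the single corner term (so that no lower direct images $R^i\nu_*(p^*K_X \otimes \mathscr{L}_\lambda^\vee)$ for $i<n$ can interfere). Both are purely formal once the setup is fixed, so I do not anticipate a substantive obstacle beyond being careful with indexing conventions.
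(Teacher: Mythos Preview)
Your argument is correct and is essentially the same as the paper's: the paper's proof consists of a single sentence citing relative duality (specifically \cite[Theorem 21]{reldual}), and what you have written is precisely an explicit unpacking of that duality statement together with the corner-of-the-spectral-sequence extraction needed to pass from the derived isomorphism to the ordinary $\underline{\mathrm{Hom}}$. Your bookkeeping with the shift by $n$ and the vanishing of all $E_2^{p,q}$ with $p+q=-n$ other than $(0,-n)$ is accurate, so there is no gap.
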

\begin{proof}
This follows from an application of relative duality in \cite[Theorem 21]{reldual}.
\end{proof}

\begin{proof}[Proof of theorem \ref{sheaf-id}]
Let $S := \text{Pic}^d(C) \cong \text{Pic}^{n(d)}(C_k) \cong \text{Pic}^{t(d)}(C_k)$. Consider the commutative diagram
$$
\xymatrix{
C^k\times S\ar[d]^{\pi}\ar[dr]^{\tau} \ar[r]^{p_i} & C\times S\ar[d]^{\nu}\\
C_k\times S \ar[r]^{\overline{\tau}} & S
}
$$
for $\pi$ the quotient map and the remaining maps just the natural projections (any of the $k$ choices for the top map is valid).\\
\\
By lemma \ref{highest-push-pic}, the sheaves $R^k\overline{\tau}_*(\widetilde{p}_1^*K_{C_k}\otimes\mathscr{N}^{\vee})$ and $R^k\overline{\tau}_*(\widetilde{p}_1^*K_{C_k}\otimes \mathscr{T}^{\vee})$ are Picard sheaves on $\text{Pic}^{t(d)}(C_k)$ and $\text{Pic}^{n(d)}(C_k)$, respectively, for \textit{any} Poincar\'{e} bundles $\mathscr{N}$ and $\mathscr{T}$ for $n(d)$ and $t(d)$ respectively. Here $\widetilde{p}_1 : C_k\times S \rightarrow C_k$ denotes the projection. For $\mathscr{L}$ a degree $d$ Poincar\'{e} bundle for $C$, we note that $\mathscr{N}$ and $\mathscr{T}$ can be chosen to be $N_{\mathscr{L}}$ and $T_{\mathscr{L}}$ respectively. Moreover, $K_{C_k} = N_{K_C}$ and for any $L$ on $C$, we have $N_L = T_L(-\Delta/2)$. Letting $\mathscr{K} := p_1^*K_C \otimes \mathscr{L}^{\vee}$ we thus have that $R^k\overline{\tau}_*(T_{\mathscr{K}})$ and $R^k\overline{\tau}_*(N_{\mathscr{K}})$ are Picard sheaves for $n(d)$ and $t(d)$ respectively (here $p_1 : C\times S \rightarrow C$ is the projection).\\
\\
Now since $\pi$ is finite (hence has vanishing higher direct images), we note that by Proposition \ref{g-eq} we have that $R^k\overline{\tau}_*(T_{\mathscr{K}})$ and $R^k\overline{\tau}_*(N_{\mathscr{K}})$ are isomorphic to $R^k\tau_*(\mathscr{K}^{\boxtimes k})^{\mathfrak{S}_k}$, for $\mathfrak{S}_k$ acting by $(-1)^{\mu}$ times the natural permutation action on $\mathscr{K}^{\boxtimes k}$, for $\mu = 0$ and $1$ respectively (again using $\mu$ to deal with both $\mathfrak{S}_k$-actions simultaneously as in section \ref{kunneth-sec}). So, since $\mathcal{F}_d = R^1\nu_*\mathscr{K}$, we have
\begin{align*}
\xymatrix{
\wedge^k\mathcal{F}_d \ar[r]^{\hspace{-20pt}\cong} & R^{k}\tau_*(\mathscr{K}^{\boxtimes k})^{\mathfrak{S}_k}} & \hspace{20pt}\text{for $\mu = 0$}\\
\xymatrix{S^k\mathcal{F}_d\ar[r]^{\hspace{-20pt}\cong} & R^{k}\tau_*(\mathscr{K}^{\boxtimes k})^{\mathfrak{S}_k}} & \hspace{20pt}\text{for $\mu = 1$}
\end{align*}
by Proposition \ref{eq-kunneth}. Since the targets of these maps are the relevant Picard sheaves (by the previous paragraph), this finishes the proof.
\end{proof}

\subsection{Deformations over $W^r_d$}\label{sym-def}

The result of this section will be the key to identifying the intersections of the irreducible components of $\text{Div}^{\lambda}(C_k)$. In what follows, $W^r_d$ and $G^r_d$ will denote the varieties of interest from Brill-Noether theory associated to a fixed curve $C$, as introduced in section \ref{bn-petri}.

\begin{definition}\label{deform-div-bundle}
Let $X$ be a smooth, projective variety with a line bundle $L$ and
suppose $\mathscr{L}$ is a line bundle on $X\times S$, viewed as a
family of line bundles $\{\mathscr{L}_t\}$ on $X$ deforming $L = \mathscr{L}_0$, parametrized by $t
\in S$ (an integral scheme) and inducing a non-constant map $\gamma : S \rightarrow \text{Pic}(X)$. We will say an effective divisor $D \in |L|$ \textit{deforms with $L$ (over $S$)} if $D$ extends to a divisor $\mathscr{D}\in |\mathscr{L}|$ which is flat over $S$.
\end{definition}

Recall the definition of $\text{Enc}(\eta)$ for $\eta \in \wedge^kV$ and $V$ some vector space (Definition \ref{enc}). Suppose $L$ is a line bundle on $C$ and $D \in |N_L|$ is a divisor on $C_k$. In what follows, since $H^0(C_k,N_L)\cong \wedge^kH^0(C,L)$, we will let $\text{Enc}(D)$ denote $\text{Enc}(\eta)$ for $\eta \in H^0(C_k,N_L)$ any section such that $\text{Zeroes}(\eta) = D$.\\
\\
Recall also that by Remark \ref{coincidences}, the maximum enclosing dimension of $\eta \in \wedge^kV$ is $\text{dim }V$ \textit{unless} either $\text{dim }V = k+1$, or $k = 2$ and $\text{dim }V$ is odd - in both cases the maximum enclosing dimension is $\text{dim }V - 1$. Let $e(k,n)$ denote the maximum enclosing dimension of elements of $\wedge^k\mathbb{C}^n$. So we have:
$$
e(k,n) = \left\{\begin{array}{cl}n - 1 & k = n-1;\quad k = 2\text{ and }n\text{ odd}\\n & \text{otherwise}\end{array}\right.
$$

\begin{theorem}\label{deform}
  Let $C$ be a Petri-general curve of genus $g$, and $d,r$ integers such that $\rho(g,r,d) > 0$. Let $L \in W^r_d$ be a line bundle on $C$ and $D \in |N_L|$ a divisor on $C_k$. Suppose $\{\mathscr{L}_t\}_{t \in S}$ is a one-parameter deformation of $L = \mathscr{L}_0$ inducing a non-constant map $\gamma : S \rightarrow \text{\emph{Pic}}^d(C)$ such that $\gamma(S\setminus 0) \subset W^r_d\setminus W^{r+1}_d$. Then $D$ deforms with (the corresponding deformation of) $N_L$ if and only if
  $$
\text{\emph{enc}}(D) \leq e(k,r+1)
  $$
\end{theorem}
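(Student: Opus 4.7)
The plan is to reduce the deformation question to a linear-algebra statement via the Picard-sheaf identification $\mathcal{F}_{n(d)} = \wedge^k\mathcal{F}_d$ of Theorem~\ref{sheaf-id}, and then apply Brill--Noether theory via the Petri hypothesis.

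\textbf{Setup and reduction.} I would first set $\mathcal{F} := \gamma^{*}\mathcal{F}_d$, a coherent sheaf on the smooth curve $S$. By the hypothesis $\gamma(S \setminus 0) \subset W^r_d \setminus W^{r+1}_d$, the generic rank of $\mathcal{F}$ is $r+1$, while $\mathcal{F}|_0 = H^0(L)^{\vee}$ has dimension $s+1 := h^0(L) \ge r+1$. On $S$ the structure theorem gives a canonical short exact sequence $0 \to T \to \mathcal{F} \to \mathcal{F}^{\circ} \to 0$, with $T$ torsion supported at $0$ and $\mathcal{F}^{\circ}$ locally free of rank $r+1$, splitting non-canonically. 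Dualizing the canonical quotient $\mathcal{F}|_0 \twoheadrightarrow \mathcal{F}^{\circ}|_0$ singles out an intrinsic $(r+1)$-dimensional subspace $W_{\mathscr{L}} := (\mathcal{F}^{\circ}|_0)^{\vee} = \text{Ann}(T|_0) \subset H^0(L)$, the limit in $G(r+1, H^0(L))$ of $H^0(\mathscr{L}_t)$ as $t \to 0$. The Picard-sheaf universal property applied with $\mathcal{N} = \gamma_{*}\mathscr{O}_S$, combined with the projection formula for the Cartesian diagram, then identifies the space of sections of $N_{\mathscr{L}}$ on $C_k \times S$ with $\text{Hom}_{\mathscr{O}_S}(\wedge^k \mathcal{F}, \mathscr{O}_S) = \wedge^k(\mathcal{F}^{\circ})^{\vee}$ (since $\text{Hom}(-, \mathscr{O}_S)$ kills torsion), locally free of rank $\binom{r+1}{k}$. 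Its fiber at $t = 0$ includes canonically into $H^0(N_L) = \wedge^k H^0(L)$ with image precisely $\wedge^k W_{\mathscr{L}}$. Hence $D = \text{Zeroes}(\eta)$ deforms along $\mathscr{L}$ if and only if $\eta \in \wedge^k W_{\mathscr{L}}$, equivalently $\text{Enc}(\eta) \subset W_{\mathscr{L}}$.

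\textbf{Necessity.} If $D$ deforms along $\mathscr{L}$, the inclusion $\text{Enc}(\eta) \subset W_{\mathscr{L}}$ gives $\text{enc}(\eta) \le \dim W_{\mathscr{L}} = r+1$. In the two coincidence cases of Remark~\ref{coincidences} where $e(k, r+1) = r$ (namely $k = r$, or $k = 2$ with $r$ even), every element of $\wedge^k \mathbb{C}^{r+1}$ already has enclosing dimension at most $r$, yielding the sharper bound $\text{enc}(\eta) \le e(k, r+1)$ in all cases.

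\textbf{Sufficiency.} Conversely, suppose $\text{enc}(\eta) \le e(k, r+1) \le r+1$. Since $\dim H^0(L) \ge r+1$, I would choose an $(r+1)$-dimensional $W \subset H^0(L)$ containing $\text{Enc}(\eta)$. The pair $(L, W) \in G^r_d(C)$ is a smooth point of an irreducible variety of dimension $\rho(g, d, r) > 0$ by the Petri--Gieseker theorem. The strict inequality $\rho(g, d, r+1) < \rho(g, d, r)$ (forced by the non-emptiness of $W^r_d \setminus W^{r+1}_d$ that the hypothesis requires) makes $W^{r+1}_d$ a proper subvariety of $W^r_d$. A general smooth arc in $G^r_d$ through $(L, W)$ thus projects to some $\gamma : S \to \text{Pic}^d(C)$ satisfying the theorem's hypotheses and producing a family $\mathscr{L}$ with $W_{\mathscr{L}} = W \supseteq \text{Enc}(\eta)$; along this $\mathscr{L}$, $D$ deforms by the characterization above.

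\textbf{Main obstacle.} The key technical step is identifying the image of the fiber-at-zero map $\bar\tau_{*}N_{\mathscr{L}}|_0 \hookrightarrow H^0(N_L)$ as the subspace $\wedge^k W_{\mathscr{L}}$, carefully tracking how the (non-canonical) splitting $\mathcal{F} = \mathcal{F}^{\circ} \oplus T$ yields the canonical limit subspace $W_{\mathscr{L}}$. The sufficiency direction is intrinsically existential in the family---one produces a one-parameter $\mathscr{L}$ (satisfying the theorem's hypotheses) whose limit subspace contains $\text{Enc}(\eta)$, so that $D$ deforms along it---which is the natural reading of the biconditional given the paper's goal of describing the irreducible components of $\text{Div}^{n(d)}(C_k)$ and their intersections.
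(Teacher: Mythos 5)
Your proposal is correct and reaches the theorem by a route that shares the paper's overall skeleton but implements the necessity step differently. The paper's forward direction fixes a defining section $\eta$ of the total family $\mathscr{D}$, forms the relative contraction map $\overline{\eta}\colon \wedge^{k-1}\mathcal{F}\to\nu_*\mathscr{L}$, and uses the fact that subsheaves of torsion-free sheaves on the smooth curve $S$ are locally free to conclude that $\operatorname{rank}\overline{\eta}(t)$ is constant; comparing with the fiberwise contraction via the base-change map then bounds $\operatorname{enc}(D_0)$ by the generic value. You instead dualize the torsion sequence $0\to T\to\gamma^*\mathcal{F}_d\to\mathcal{F}^{\circ}\to 0$ and identify $\overline{\tau}_*N_{\mathscr{L}}$ with $\wedge^k(\mathcal{F}^{\circ})^{\vee}$, so that the possible limits at $t=0$ of deforming sections form exactly $\wedge^kW_{\mathscr{L}}$ for the canonical $(r+1)$-dimensional subspace $W_{\mathscr{L}}=\operatorname{Ann}(T|_0)\subset H^0(L)$. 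This is a cleaner packaging: it characterizes at once every section that deforms along a given arc, and it makes the passage from the crude bound $r+1$ to the sharp bound $e(k,r+1)$ immediate, since by definition $e(k,r+1)$ is the maximal enclosing dimension of elements of $\wedge^k$ of an $(r+1)$-dimensional space. Both arguments rest on the same two inputs -- Theorem \ref{sheaf-id} together with compatibility of Picard sheaves with base change along $\gamma$ (your flat-base-change/projection-formula step, which does need to be spelled out since $\gamma$ itself is not flat), and local freeness of torsion-free sheaves on $S$ -- and your sufficiency direction (arc in $G^r_d$ through $(L,W)$, Petri--Gieseker smoothness, the universal subbundle, and the existential reading of the biconditional) is the paper's argument verbatim in outline.

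The one step you assert without proof is that the family obtained from an arc in $G^{r}_d$ through $(L,W)$ has limit subspace $W_{\mathscr{L}}=W$. This is true but deserves a line: the universal subbundle gives $E\subset\nu_*\mathscr{L}\cong(\mathcal{F}^{\circ})^{\vee}$, an inclusion of rank-$(r+1)$ bundles that is an isomorphism for $t\neq 0$, and the injectivity of $\iota_0\colon E_0\to H^0(L)$ (part of the universal-family data) forces $E_0\to(\mathcal{F}^{\circ})^{\vee}|_0=W_{\mathscr{L}}$ to be an isomorphism, i.e.\ $W=W_{\mathscr{L}}$. The paper sidesteps this by never invoking the limit subspace in the converse direction: it builds the deforming section directly as the image of a nonvanishing section of $\wedge^kE$ under $\wedge^k\nu_*\mathscr{L}\to\tau_*N_{\mathscr{L}}$ and checks flatness from $\iota_t(\eta(t))\neq 0$. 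Either repair is short, so I regard this as a presentational gap rather than a mathematical one.
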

\begin{proof}
Suppose first that $D$ deforms with $N_L$ and therefore extends to a flat family $\{D_t\}_{t \in S}$ for some irreducible curve $S$ with $D_0 = D$ at $0 \in S$. In what follows it suffices to suppose $S$ is smooth since base-changing along the normalization of a singular $S$ will not change the divisors $D_t$.\\
\\
Let $\mathscr{D}$ denote the total space of the family $\{D_t\}$ inside $C_k\times S$ and let $\eta \in H^0(C_k\times S,\mathscr{O}(\mathscr{D}))$ be a defining section of $\mathscr{D}$ whose restriction to $N_{\mathscr{L}_t}$ is \textit{also} a defining section of $D_t$ (such a section is always available by \cite[Lemma 9.3.4]{fga}). The following diagram illuminates the setup:
\begin{center}
\begin{tikzcd}
\mathscr{D} \arrow[r,symbol=\subseteq] & C_k\times S \arrow[dr,,"\tau"] & & C\times S\arrow[dl,,"\nu"']\\
& & S \arrow[rr,,"\gamma"'] & & \text{Pic}^d(C)
\end{tikzcd}
\end{center}
The family $\{\mathscr{L}_t\}$ yields a line bundle $\mathscr{L}$ on $C\times S$ and induces the map $\gamma : S \rightarrow \text{Pic}^d(C)$, and $N_{\mathscr{L}}$ is the line bundle $\mathscr{O}(\mathscr{D})$ on $C_k\times S$. Let $\eta_t := \eta\vert_{C_k\times t} \in H^0(C_k,N_{\mathscr{L}_t})$ for $t \in S$. Just as $\eta_t$ induces the map $\langle \eta_t,\_\rangle : \wedge^{k-1}H^0(C,\mathscr{L}_t)^{\vee} \rightarrow H^0(C,\mathscr{L}_t)$ (see section \ref{lin-alg}) whose image is $\text{Enc}(D_t)$, the section $\eta \in H^0(S,\tau_*N_{\mathscr{L}})$ induces a \textit{relative} version of this map:
\begin{center}
\begin{tikzcd}
\wedge^{k-1}\mathcal{F} \arrow[r,"\overline{\eta}"] & \nu_*\mathscr{L}
\end{tikzcd}
\end{center}
where $\mathcal{F}$ denotes the pullback of the Picard sheaf along $\gamma$.\\
\\
If $\overline{\eta}(t)$ denotes the induced map $\wedge^{k-1}\mathcal{F}\otimes \mathbb{C}(t) \rightarrow (\nu_*\mathscr{L})\otimes \mathbb{C}(t)$ on fibers, then $\overline{\eta}(t) = \langle \eta_t,\_\rangle$ for all $t\not= 0$ (since $h^0(\mathscr{L}_t)$ is \textit{constant} on that locus).\\
\\
Since $\nu_*\mathscr{L}$ is torsion-free on a smooth curve $S$, it is locally free. Similarly, since subsheaves of torsion-free sheaves are also torsion-free, $\text{im}(\overline{\eta})$ is also locally free. Hence $\text{rank}(\overline{\eta}(t))$ is constant.\\
\\
Denoting by $\phi_t : (\nu_*\mathscr{L})\otimes \mathbb{C}(t) \rightarrow H^0(\mathscr{L}_t)$ the base-change map for $\nu$ at $t$, one can check that the following diagram commutes for \textit{all} $t \in S$:
\begin{center}
\begin{tikzcd}
\wedge^{k-1}H^0(\mathscr{L}_t)^{\vee} = \wedge^{k-1}\mathcal{F}\otimes \mathbb{C}(t) \arrow[dr,"{\langle\eta_t,\_\rangle}"] \arrow[rr,"\overline{\eta}(t)"] & & (\nu_*\mathscr{L})\otimes \mathbb{C}(t)\arrow[dl,"\phi_t"']\\
& H^0(\mathscr{L}_t)
\end{tikzcd}
\end{center}
and hence $\text{rank }\langle \eta_t, \_\rangle \leq \text{rank }\overline{\eta}(t)$, with equality if $\phi_t$ is injective.\\
\\
Since $\text{enc}(D_t) = \text{rank }\langle \eta_t,\_\rangle \leq h^0(\mathscr{L}_t)$ and $\text{rank }\overline{\eta}(t)$ is constant $\leq e(k,r+1)$, we get the desired inequality:
$$
\text{enc}(D_0) \leq \text{rank}(\text{im}(\overline{\eta})) \leq e(k,r+1)
$$
\indent For the converse statement, we suppose instead that $D$ is such that $\text{enc}(D)
  \leq e(k,r+1)$. We need to produce a deformation of $D$
  over an irreducible curve $S$ admitting a non-constant classifying map
  $\gamma : S \rightarrow W^r_d \subset \text{Pic}^d(C)$.\\
\\
We note that by the dimension condition, $\text{Enc}(D) \subset
  V \subset H^0(C,L)$ for $\mathbb{P}_{\text{sub}}V \subset |L|$ a $g^r_d$ on
  $C$. If $C$ is Petri-general, then $G^r_d(C)$ is smooth of dimension $\rho(g,r,d)$ (the Brill-Noether number) and $\text{dim }W^r_d = \text{dim }G^r_d$ (\cite[p. 214]{acgh1}) so since $\rho(g,r,d) > 0$ we can pick
  $S \subset G^r_d$ a smooth curve centered at $[\mathbb{P}_{\text{sub}}V] \in G^r_d$ and
  mapping birationally to a curve in $W^r_d$.\\
\\
By the universal property for $G^r_d$ we get a corresponding
  family of $g^r_d$'s on $C$ over $S$. This consists of the data of a
  line bundle $\mathscr{L}$ on $C\times S$ and a subsheaf $E\subset
  \nu_*\mathscr{L}$ locally free of rank $r + 1$ with \textit{injective} maps
  $\iota_t: E_t\rightarrow H^0(C,\mathscr{L}_t)$ for all $t \in S$ (see \cite[p. 184]{acgh1}). In
  particular, note that $E_0 = V$.\\
\\
We choose $\eta_D \in H^0(C_k,N_L)$ such that $D =
  \text{Zeroes}(\eta_D)$ and note that, by definition of
  $\text{Enc}(D)$, $\eta_D \in \wedge^k\text{Enc}(D) \subset \wedge^kV
  = (\wedge^kE)_0$.\\
\\
After possibly replacing $S$ with an open subset containing $0$,
  we can assume $E$ is trivial with a section $\eta \in
  H^0(\wedge^kE)$ which is non-vanishing on $S$ and such that $\iota_0(\eta(0)) =
 \eta_D$.\\
\\
Now we note that since $E \subset \nu_*\mathscr{L}$ we have an
  inclusion $\wedge^kE \subset \wedge^k\nu_*\mathscr{L}$. We also have
 a natural map\footnote{obtained in an analogous way to those in Proposition \ref{eq-kunneth} though this particular map need not be an isomorphism} $\wedge^k\nu_*\mathscr{L} \rightarrow
  \tau_*N_{\mathscr{L}}$ which
  means we can think of $\eta$ (above) as lying in $H^0(S,
  \tau_*N_{\mathscr{L}}) \cong H^0(C_k\times S,N_{\mathscr{L}})$. As
  an element of the latter section space, we
  can therefore use it to define $\mathscr{D} := \text{Zeroes}(\eta)$
  which will be a family of divisors $\{D_t\}_{t \in S}$ where $D_t =
  \text{Zeroes}(\iota_t(\eta(t)))$ (in particular $D_0 =
  D$), which is \textit{flat} since the maps $\iota_t$ are
  \textit{injective} and therefore ensure that $\iota_t(\eta(t)) \not=
  0$ for all $t \in S$ (see \cite[Lemma 9.3.4]{fga}).\\
  \\
  Hence we have produced the necessary family and conclude that
  $D$ indeed deforms with $N_L$.
\end{proof}

\begin{remark}The same deformation result follows analogously in the symmetric case for $N_L$ replaced by $T_L$ and the number $e(k,n)$ replaced by the maximum enclosing dimension $e'(k,n)$ of elements of $S^k\mathbb{C}^n$ which has values
$$
e'(k,n) = \left\{\begin{array}{cl}n - 1 & k = 2\text{ and }n\text{ odd}\\n & \text{otherwise}\end{array}\right.
$$
\end{remark}

\subsection{Divisor varieties on $C_k$}
Identification of the Picard sheaves for $\text{Pic}^{n(d)}(C_k)$ and $\text{Pic}^{t(d)}(C_k)$ together with the conclusion about how the divisors deform enables us to now give rather complete descriptions of $\text{Div}^{n(d)}(C_k)$ and $\text{Div}^{t(d)}(C_k)$ including: the number and dimensions of the irreducible components and the nature of their pairwise intersections.

\begin{definition}
For positive integers $d, e, k$, N\'{e}ron-Severi class $\lambda = n(d)$ or $t(d)$, and $u : \text{Div}^{\lambda}(C_k) \rightarrow \text{Pic}^{\lambda}(C_k) \cong \text{Pic}^d(C)$ the Abel-Jacobi map, we define the following closed subvarieties of $\text{Div}^{\lambda}(C_k)$:
$$
(C_k)^e_{\lambda} := \overline{\{D \in \text{Div}^{\lambda}(C_k) : \text{enc}(D) \leq e \text{ and }u(D) \in W^{e-1}_d\setminus W^e_d\}}
$$
where the closure is taken in $\text{Div}^{\lambda}(C_k)$.
\end{definition}
We will see in a moment that, for appropriate choices of $e$, these subvarieties form the irreducible components of $\text{Div}^{\lambda}(C_k)$ and we can count their number by determining how many distinct possibilities there are for the enclosing dimension as $\mathscr{O}_{C_k}(D)$ varies in $\text{Pic}^{\lambda}(C_k)$.\\
\\
Heuristically, consider a divisor varying continuously in $\text{Div}^{\lambda}(C_k)$ (we will think of $\lambda = n(d)$ for the moment, but $\lambda = t(d)$ is analogous). If $D$ moves continuously to $D'$, this induces $L$ to move continuously to $L'$ (for $L$ and $L'$ the line bundles on $C$ such that $D \in |N_L|$ and $D'\in |N_{L'}|$). If the line bundle moves from $W^r_d$ into $W^s_d$ (for $s > r$) then $\text{enc}(D) \leq r + 1$ and by Theorem \ref{deform} $\text{enc}(D') \leq r + 1$ too. Conversely, the same theorem essentially says that if $D' \in |N_{L'}|$ and $L'$ moves from $W^s_d$ out to $L \in W^r_d$, then $D'$ can move with $L'$ to some $D \in |N_L|$ \textit{as long as} $\text{enc}(D') \leq r + 1$.\\
\\
Reasoning in this way for every element of a fixed linear system $|N_L|$, we will conclude that as $L$ moves from $W^r_d$ to $L' \in W^s_d$, the elements of the linear system $|N_L|$ \textit{all} move into $\text{Sub}_m(\wedge^kH^0(L'))$ for $m = e(k,h^0(L))$, and vice versa.\\
\\
The idea is therefore that, as $L$ moves in $\text{Pic}^d(C)$ in such a way that $h^0(L)$ increases from $r_d + 1$ to $R_d + 1$ (recall from section \ref{bn-petri} that these are the minimum and maximum section counts in $\text{Pic}^d(C)$) and the linear systems $|L|$ trace out $\text{Div}^{n(d)}(C_k)$, this divisor variety will pick up new components exactly when the number $e(k,h^0(L))$ \textit{jumps}. We record these jumps by defining $\mathcal{E}(k,\lambda)$ to be the set of these jump values in the symmetric ($\lambda = t(d)$) and skew-symmetric ($\lambda = n(d)$) cases:
\begin{align*}
  \mathcal{E}(k,n(d)) & := \{e \in \mathbb{N} : e = e(k,h^0(L))\text{ for some }L \in \text{Pic}^d(C)\}\\
  \mathcal{E}(k,t(d)) & := \{e \in \mathbb{N} : e = e'(k,h^0(L))\text{ for some }L \in \text{Pic}^d(C)\}
\end{align*}
By Remark \ref{coincidences} we can determine these sets precisely (note that the values are \textit{even} when $k = 2$):
\begin{align*}
  \mathcal{E}(2,t(d)) & := \{e \in 2\mathbb{N} : r_d + 1 \leq e\leq R_d + 1\} & \mathcal{E}(k,t(d)) & := \{e \in \mathbb{N} : r_d + 1\leq e \leq  R_d + 1\}\\
\mathcal{E}(2,n(d)) & := \{e \in \mathcal{E}(2,t(d)) : e \geq 2\} &  \mathcal{E}(k,n(d)) & := \{e \in \mathcal{E}(k,t(d)) : e \geq k,\hspace{2pt} e \not= k + 1\}
\end{align*}
\noindent With these definitions in place, we are ready to state the following results:

\begin{theorem}[Irreducible components]\label{irreds}
  Let $C$ be a smooth projective Petri-general curve of genus $g$ over $\mathbb{C}$ and let $d, k \in \mathbb{Z}_{>0}$. Assume $\rho(g,R_d,d) \not=0$ (see Corollary \ref{components}). Then for $\lambda = n(d)$ or $t(d)$ and $\mathcal{E}(k,\lambda)$ as above:
    $$
    \text{\emph{Div}}^{\lambda}(C_k) = \bigcup_{e\in \mathcal{E}(k,\lambda)}(C_k)^{e}_{\lambda}
    $$
    are decompositions into irreducible components.
\end{theorem}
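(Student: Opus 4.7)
My plan is to verify three things in turn: (i) the subvarieties $(C_k)^e_\lambda$ cover $\text{Div}^\lambda(C_k)$, (ii) each is irreducible, and (iii) none is contained in another. The main tools are Theorem \ref{sheaf-id}, which realizes $\text{Div}^\lambda(C_k)$ as the projectivization of a Picard sheaf whose rank is controlled by $h^0$ on $C$, and Theorem \ref{deform}, which pins down exactly when a divisor deforms across a Brill-Noether stratum.

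\textit{Coverage.} Given $D \in \text{Div}^\lambda(C_k)$, set $L = u(D) \in W^r_d \setminus W^{r+1}_d$ with $r \in \mathcal{R}_d$; then $h^0(L) = r + 1$ and $\text{enc}(D) \leq e(k, r+1)$. If $r + 1 \in \mathcal{E}(k, \lambda)$, then $D$ directly meets the defining conditions of $(C_k)^{r+1}_\lambda$. Otherwise, the explicit description of $\mathcal{E}(k, \lambda)$ combined with Remark \ref{coincidences} leaves only the exceptional case $r + 1 = k + 1$ in the skew-symmetric setting (and the analogous parity case when $k = 2$); here $e(k, k+1) = k = e(k, k)$, so Theorem \ref{deform} supplies a one-parameter deformation of $L$ within $W^{k-1}_d$ whose generic member lies in $W^{k-1}_d \setminus W^k_d$ and carries $D$ along to a flat family of divisors. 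The deformed divisors have $h^0$ of the underlying bundle equal to $k$, hence lie in the defining locus of $(C_k)^k_\lambda$, so $D$ lies in its closure.

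\textit{Irreducibility.} On $W^{e-1}_d \setminus W^e_d$, the quantity $h^0$ is constant equal to $e$, so by Theorem \ref{sheaf-id} the Picard sheaf $\mathcal{F}_\lambda$ is locally free of constant rank $\binom{e}{k}$ (skew) or $\binom{e + k - 1}{k}$ (symmetric) there. Consequently, $\{D : u(D) \in W^{e-1}_d \setminus W^e_d\} \subset \text{Div}^\lambda(C_k)$ is a projective bundle over $W^{e-1}_d \setminus W^e_d$ and coincides (since the condition $\text{enc}(D) \leq e$ is automatic on this locus) with the defining set of $(C_k)^e_\lambda$. On a Petri-general curve with $\rho > 0$, $W^{e-1}_d$ is irreducible by Fulton-Lazarsfeld \cite{fullaz}, so the open stratum $W^{e-1}_d \setminus W^e_d$ is as well, and the projective bundle (hence its closure $(C_k)^e_\lambda$) is irreducible.

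\textit{Maximality.} Suppose $(C_k)^e_\lambda \subset (C_k)^{e'}_\lambda$ with $e \neq e'$. If $e < e'$, the generic $D$ of the smaller satisfies $h^0(u(D)) = e$, while the inclusion forces $u(D) \in \overline{W^{e'-1}_d \setminus W^{e'}_d} = W^{e'-1}_d$, i.e.\ $h^0(u(D)) \geq e' > e$, a contradiction. If $e > e'$, the generic $D$ of the larger has $\text{enc}(D) = e(k, e) > e'$, whereas expressing $D$ as a limit of generic points of $(C_k)^{e'}_\lambda$ and invoking Theorem \ref{deform} yields $\text{enc}(D) \leq e'$, another contradiction. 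I expect the main obstacle to be careful bookkeeping of the coincidence cases in Remark \ref{coincidences} (especially the $k = 2$ parity alternative and the $e = k + 1$ exclusion), since these govern which $e$-values are genuine jumps and where the coverage argument needs the auxiliary deformation.
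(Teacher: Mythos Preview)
Your proof is correct and follows essentially the same three-step approach as the paper: coverage via Theorem \ref{deform}, irreducibility via the projective-bundle description coming from Theorem \ref{sheaf-id}, and pairwise non-containment by comparing images under $u$ in one direction and enclosing dimensions (again via Theorem \ref{deform}) in the other. The paper phrases the coverage step slightly differently---it checks that any $D$ with $\text{enc}(D)\le e$ but $u(D)\in W^e_d$ still lies in $(C_k)^e_\lambda$, whereas you organize by the stratum of $u(D)$ and explicitly isolate the coincidence cases from Remark \ref{coincidences}---but the underlying use of Theorem \ref{deform} is identical.
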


\begin{proof}
That the divisor variety $\text{Div}^{\lambda}(C_k)$ is precisely the stated union of subvarieties is almost immediate - we need only confirm that any divisor $D$ such that $\text{enc}(D) \leq e$ but $u(D) \in W^e_d$ is still in this union, for $e \in \mathcal{E}(k,\lambda)$. This follows from Theorem \ref{deform} which says that since $\text{enc}(D) \leq e$ there is a one-parameter family of divisors $\{D_t\}_{t \in S}$ such that $D_0 = D$, $\text{enc}(D_t) = e$ for $t\not= 0$ and such that $\gamma(S\setminus 0) \subset W^{e-1}_d\setminus W^e_d$ for $\gamma$ the classifying map for the family $\{\mathscr{O}_{C_k}(D_t)\}$ of line bundles. Hence $D \in (C_k)^e_{\lambda}$. Note that this implies $u((C_k)^e_{\lambda}) = W^{e-1}_d$.\\
\\
We see that $(C_k)^e_{\lambda}$ must be irreducible since it is the closure of the set $\{D \in \text{Div}^{\lambda}(C_k) : \text{enc}(D) \leq e \text{ and }u(D) \in W^{e-1}_d\setminus W^e_d\}$ which, by the identification of the Picard sheaf in Theorem \ref{sheaf-id}, is evidently a projective space bundle over a smooth, irreducible (since we assume $\rho(g,R_d,d) \not= 0$) base and thus irreducible.\\
\\
Finally, there can be no pairwise containments among the $(C_k)^e_{\lambda}$'s. To see this, let $e < f \in \mathcal{E}(k,\lambda)$, choose line bundles $L_1 \in W^{e-1}_d\setminus W^e_d$ and $L_2 \in W^{f-1}_d\setminus W^f_d$ and let $D_1 \in |N_{L_1}|$ and $D_2 \in |N_{L_2}|$ such that $\text{enc}(D_2) = f$. We see that $D_1 \not\in (C_k)^f_{\lambda}$ since $u((C_k)^f_{\lambda}) = W^{f-1}_d$ and $L_1 \not\in W^{f-1}_d$. And since $\text{enc}(D_2) > e$, Theorem \ref{deform} implies it cannot be in $(C_k)^e_{\lambda}$.\\
\\
Since for $e \in \mathcal{E}(k,\lambda)$ the subvarieties $(C_k)^e_{\lambda}$ are closed, irreducible, pairwise distinct and have union equal to $\text{Div}^{\lambda}(C_k)$, they form the claimed irreducible decomposition.\\
\\
In the case that $\rho(g,R_d,d) = 0$ the same argument implies that $\text{Div}^{\lambda}(C_k)$ is still the stated union and all the subvarieties in that union are still closed, irreducible and pairwise distinct \textit{except} $(C_k)^{R_d+1}_{\lambda}$ which is no longer irreducible - we deal with it in Corollary \ref{components}.
\end{proof}

The next theorem describes the intersections of the components identified in the previous theorem.

\begin{theorem}[Component intersections]\label{inters}
  Let $C$, $g$,$d$ and $k$ be as in Theorem \ref{irreds} and let $e,f \in \mathcal{E}(k,\lambda)$ with $e < f$. Let $u : \text{\emph{Div}}^{\lambda}(C_k) \rightarrow \text{\emph{Pic}}^{\lambda}(C_k)$ be the Abel-Jacobi map. Then we have:
$$
u\left((C_k)^e_{\lambda}\cap(C_k)^f_{\lambda}\right) = W^{f-1}_d(C)
$$
and for any $L \in W^{f-1}_d(C)$:
$$
u^{-1}(L) \cap (C_k)^e_{\lambda}\cap (C_k)^f_{\lambda}\cong \left\{\begin{array}{cl}\text{\emph{Sub}}_e(\wedge^k \mathbb{C}^{r + 1}) & \text{if $\lambda = n(d)$}\\\\\text{\emph{Sub}}_e(S^k\mathbb{C}^{r + 1}) & \text{if $\lambda = t(d)$}\end{array}\right.
$$
where $r = h^0(L) - 1$.
\end{theorem}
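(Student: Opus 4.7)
The strategy is to work fiber by fiber over the Abel-Jacobi map $u$. By the proof of Theorem~\ref{irreds}, $u\bigl((C_k)^e_\lambda\bigr) = W^{e-1}_d$, so for $e < f$ we immediately have
$$
u\bigl((C_k)^e_\lambda \cap (C_k)^f_\lambda\bigr) \;\subseteq\; W^{e-1}_d \cap W^{f-1}_d \;=\; W^{f-1}_d,
$$
and the equality of images will follow once the fiber description below is established, together with non-emptiness of the relevant subspace variety over each $L \in W^{f-1}_d$.

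The central claim, stated for $\lambda = n(d)$ (the $t(d)$ case being analogous), is the fiber identification
$$
u^{-1}(L) \cap (C_k)^e_\lambda \;=\; \text{Sub}_e\bigl(\wedge^k H^0(L)\bigr) \;\subset\; \mathbb{P}\bigl(\wedge^k H^0(L)\bigr)^\vee \;=\; |N_L|,
$$
valid for any $L \in W^{e-1}_d$. For the containment $\supseteq$, I use the converse direction of Theorem~\ref{deform}: given $D \in |N_L|$ with $\text{enc}(D) \leq e$, that theorem produces a one-parameter family $\{D_t\}_{t \in S}$ with $D_0 = D$ whose classifying map on line bundles lands in $W^{e-1}_d \setminus W^e_d$ off the central fiber, placing $D$ in $(C_k)^e_\lambda$ by definition of the latter. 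Invoking that converse requires the inequality $\text{enc}(D) \leq e(k,e)$, and an inspection of Remark~\ref{coincidences} confirms $e(k,e) = e$ precisely because $e \in \mathcal{E}(k,\lambda)$: the set $\mathcal{E}$ was designed to exclude the exceptional values ($e = k+1$ for $k \geq 3$, and odd $e$ for $k = 2$) at which the maximum enclosing dimension drops by one. The hypothesis $\rho(g, R_d, d) \neq 0$, combined with monotonicity of $\rho$ in $r$, delivers $\rho(g, e-1, d) > 0$, as required by Theorem~\ref{deform}.

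For the reverse containment $\subseteq$, I would argue by upper semi-continuity of the enclosing dimension across the total divisor variety. Any $D \in (C_k)^e_\lambda \cap u^{-1}(L)$ is, by definition of the closure, a limit of divisors $D_t$ with $u(D_t) \in W^{e-1}_d \setminus W^e_d$; passing to a one-parameter arc through $D$ inside $(C_k)^e_\lambda$, one obtains exactly the kind of family to which the forward direction of Theorem~\ref{deform} applies, yielding $\text{enc}(D) \leq e(k,e) = e$. Hence $D \in \text{Sub}_e\bigl(\wedge^k H^0(L)\bigr)$, completing the fiber identification. Intersecting the resulting descriptions for $(C_k)^e_\lambda$ and $(C_k)^f_\lambda$ and using $\text{Sub}_e(\wedge^k V) \subseteq \text{Sub}_f(\wedge^k V)$ for $e \leq f$ then gives
$$
u^{-1}(L) \cap (C_k)^e_\lambda \cap (C_k)^f_\lambda \;=\; \text{Sub}_e\bigl(\wedge^k H^0(L)\bigr) \;\cong\; \text{Sub}_e(\wedge^k \mathbb{C}^{r+1}),
$$
exactly as claimed. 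Non-emptiness of this fiber for each $L \in W^{f-1}_d$ (which holds since $r+1 = h^0(L) \geq f > e$) furnishes the reverse image containment $W^{f-1}_d \subseteq u\bigl((C_k)^e_\lambda \cap (C_k)^f_\lambda\bigr)$. I expect the main technical subtlety to be keeping track of the two exceptional regimes of Remark~\ref{coincidences} when verifying that $e(k,e) = e$ in each case; but these are exactly what the definition of $\mathcal{E}(k,\lambda)$ is engineered to sidestep, so the proof ultimately reduces to a clean application of Theorems~\ref{irreds} and~\ref{deform} in both directions.
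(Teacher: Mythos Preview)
Your proposal is correct and follows essentially the same approach as the paper. The paper's own proof is terser---it directly asserts $u^{-1}(L)\cap(C_k)^e_\lambda\cap(C_k)^f_\lambda=\{D\in|N_L|:\text{enc}(D)\le e,f\}$ by appeal to the identifications already established in the proof of Theorem~\ref{irreds}---whereas you unpack both containments explicitly via the two directions of Theorem~\ref{deform} and verify the $e(k,e)=e$ compatibility with $\mathcal{E}(k,\lambda)$; but the underlying logic is identical.
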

\begin{remark}
Recall that $\text{Sub}_e(\wedge^2\mathbb{C}^{r + 1}) = \text{Sec}_{e/2}G(2,r + 1)$ and $\text{Sub}_e(S^2\mathbb{C}^{r + 1}) = \text{Sec}_{e/2}(\nu_2(\mathbb{P}^r))$ (for $\nu_2$ the quadratic Veronese map) are varieties of $(e/2)$-secant-$(e/2-1)$-planes.
\end{remark}

\begin{proof}[Proof of Theorem \ref{inters}]
  Since $u((C_k)^e_{\lambda}) = W^{e-1}_d$ (as we saw in the proof of Theorem \ref{irreds}, we have that $u((C_k)^e_{\lambda}\cap(C_k)^f_{\lambda}) = W^{e-1}_d\cap W^{f-1}_d = W^{f-1}_d$ since $e < f$ implies $W^{f-1}_d \subset W^{e-1}_d$.\\
  \\
  Since the Abel-Jacobi map is none other than the projection of the projective bundle $\mathbb{P}\wedge^k\mathcal{F}_d$ we know that $u^{-1}(L) = \mathbb{P}(\wedge^k\mathcal{F}_d)_{L} = \mathbb{P}\wedge^kH^0(C,L)^{\vee} \cong |N_L|$. Hence
  $$
  u^{-1}(L) \cap (C_k)^e_{\lambda}\cap (C_k)^f_{\lambda} = \{D \in |N_L| : \text{enc}(D) \leq e,f\}
  $$
which is simply $\text{Sub}_e(\wedge^kH^0(L))$ since $e < f$.
\end{proof}

\begin{corollary}[Component count]\label{components}
Keep the hypotheses of Theorems \ref{irreds} and \ref{inters} and
recall the definitions of $\mathcal{R}_d$, $R_d$ and $r_d$ from
section \ref{bn-petri}. Then, except possibly when $\rho(g,R_d,d) = 0$, 
\begin{itemize}
\item there are $\left\lfloor|\mathcal{R}_d|/2\right\rfloor + \epsilon$ irreducible components of both $\text{\emph{Div}}^{n(d)}(C_2)$ and $\text{\emph{Div}}^{t(d)}(C_2)$ where $\epsilon = 0$ if $r_d$ is odd, and $\epsilon = 1$ if $r_d$ is even.
\item for $k\geq 3$ there are $(|\mathcal{R}_d|-1)-(k-r_d)$ irreducible components of $\text{\emph{Div}}^{n(d)}(C_k)$ and $|\mathcal{R}_d|-1$ irreducible components of $\text{\emph{Div}}^{t(d)}(C_k)$.
\end{itemize}
In the event that $\rho(g,R_d,d) = 0$ (i.e. when $(d-g-1)^2 + 4d$ is a square) we have that $W^{R_d}_d(C)$ is zero-dimensional and over each point is a distinct component of $\text{\emph{Div}}(C_k)$ for all $k\geq 2$. So we must increase the component counts above by $g!\cdot\lambda(g,R_d,d) - 1$ where
$$
\lambda(g,r,d) = \prod_{i=0}^r\frac{i!}{(g-d+r+i)!}
$$
is $(1/g!)$ times the degree of the zero-dimensional scheme $W^{R_d}_d(C)$ (as defined in \cite[pg. 235]{griffhar}). For Petri-general $C$ that scheme is a disjoint union of distinct points.
\end{corollary}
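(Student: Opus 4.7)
The plan is to deduce the corollary directly from Theorem \ref{irreds}, which identifies the irreducible components of $\text{Div}^\lambda(C_k)$ with the index set $\mathcal{E}(k,\lambda)$. The task then reduces to the elementary combinatorics of computing $|\mathcal{E}(k,\lambda)|$, together with a separate adjustment to the top stratum in the boundary case $\rho(g,R_d,d) = 0$.

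For $k = 2$, the key observation is that the enclosing dimension of a (skew-)symmetric $2$-tensor coincides with the rank of the corresponding matrix, so $e(2,m) = e'(2,m) = 2\lfloor m/2\rfloor$. As $m = h^0(L)$ runs through $\{r_d+1,\ldots,R_d+1\}$, pairs of consecutive values collapse to the same floor, giving $\lfloor |\mathcal{R}_d|/2\rfloor + \epsilon$ distinct values; the correction $\epsilon$ records whether $r_d + 1$ begins a fresh pair (i.e.\ whether $r_d$ is even). For $k \geq 3$, I would directly count the elements of the intervals describing $\mathcal{E}(k,t(d))$ and $\mathcal{E}(k,n(d))$, the latter omitting the value $k + 1$ together with the $k - r_d$ values below $k$, yielding the claimed formulas.

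For the $\rho(g,R_d,d) = 0$ case, I would invoke the Brill-Noether dimension and existence theorems together with Petri-generality (see \cite[Ch. V]{acgh1}) to conclude that $W^{R_d}_d(C)$ is a reduced zero-dimensional scheme, and Castelnuovo's degree formula (\cite[p. 235]{griffhar}) to count its points as $g!\lambda(g,R_d,d)$. The top stratum $(C_k)^{R_d+1}_\lambda$ now fibers over $W^{R_d}_d$ with projective-space fibers $u^{-1}(L) \cong |N_L|$ (resp.\ $|T_L|$); since these lie over isolated points of the Picard base, they are pairwise disjoint, closed and irreducible, and each is a separate component because Theorem \ref{deform} forbids any deformation away from an isolated point of $W^{R_d}_d$. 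Meanwhile the strata $(C_k)^e_\lambda$ for $e \leq R_d$ remain single irreducible components, as their images $W^{e-1}_d$ are positive-dimensional and irreducible. Thus the component count from the main case goes up by exactly $g!\lambda(g,R_d,d) - 1$.

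The main obstacle is the last step. One must verify that, in the $\rho = 0$ setting, no fiber $|N_L|$ (for $L \in W^{R_d}_d$) is absorbed into the closure of a lower stratum $(C_k)^e_\lambda$ with $e \leq R_d$, and that distinct fibers remain disjoint as irreducible components. Both assertions follow from the fact that the Abel-Jacobi map $u$ separates these fibers over distinct points of $\text{Pic}^d(C)$, combined with Theorem \ref{deform}, which forbids a divisor of maximal enclosing dimension $R_d + 1$ from deforming to a different isolated point $L' \in W^{R_d}_d$ (any such deformation would have to leave $L$ into the zero-dimensional scheme $W^{R_d}_d$, which has no positive-dimensional connected subschemes). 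The remaining two steps are elementary counting.
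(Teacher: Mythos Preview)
Your proposal is correct and follows essentially the same approach as the paper: the paper's proof simply says the result ``follows from counting the range of values $e$ used in the decomposition described in Theorem~\ref{irreds},'' with the $\rho=0$ correction coming from the Griffiths--Harris class computation. You carry out exactly this count, supplying additional detail (the parity analysis for $k=2$, the explicit exclusions for $\mathcal{E}(k,n(d))$ when $k\geq 3$, and the argument via $u$ and Theorem~\ref{deform} that the fibers over isolated points of $W^{R_d}_d$ are genuinely separate components) that the paper leaves implicit.
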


\begin{proof}
This corollary follows from counting the range of values $e$ used in the decomposition described in Theorem \ref{irreds}. The correction then deals with the few instances not covered by part (2) - it follows from the calculation of the class $w^{R_d}_d$ in \cite[pg. 235]{griffhar} which counts the number of points in $W^{R_d}_d$ when it is zero-dimensional.
\end{proof}

\begin{example}
Recall Example \ref{eg}: in that case we described
$\text{Div}^{\lambda}(C_2)$ for $\lambda = c_1(K_{C_2}) = n(2g-2)$. In this
case, $r_{2g-2} = g-2$ and $R_{2g-2} = g-1$ so we do indeed have
$\rho(g,R_{2g-2},2g-2) = 0$. By Corollary \ref{components} we should
therefore expect
$$
\lfloor |\mathcal{R}_{2g-2}|/2\rfloor + \epsilon +
g!\cdot \lambda(g,R_{2g-2},2g-2) - 1
$$
components where $\epsilon = 0$ if
$g$ is odd and $\epsilon = 1$ if $g$ is even. By Equation
\ref{wrd-count} we have $|\mathcal{R}_{2g-2}| = (g-1) - (g-2) + 1 = 2$
and by the equation for $\lambda(g,r,d)$ in the corollary, we have
$\lambda(g,R_{2g-2},2g-2) = 1/g!$. Hence the component count is
\begin{align*}
\lfloor |\mathcal{R}_{2g-2}|/2\rfloor + \epsilon +
g!\cdot \lambda(g,R_{2g-2},2g-2) - 1 & = \lfloor 2/2\rfloor + \epsilon
                                       + g!\cdot 1/g! - 1\\
  & = 1 + \epsilon\\
  & = \left\{\begin{array}{cl}1 & g\text{ odd}\\2 & g\text{ even}\end{array}\right.
\end{align*}
which coincides with the descriptions given in Example \ref{eg} and
with the related results and example of Beauville for surfaces in \cite[\S4]{be}.
\end{example}

Before stating a final conclusion on dimension, we construct a \textit{relativized} version of the
desingularization of our subspace varieties described in section
\ref{lin-alg}. In this setting, the Brill-Noether variety $G^{e-1}_d = G^{e-1}_d(C)$ will play the relative analogue of the Grassmannian $G(e,V)$. $G^{e-1}_d$ comes equipped with a \textit{universal family of $g^{e-1}_d$'s} on $C$ (see \cite[p. 183]{acgh1}). This consists of the data of a line bundle $\mathscr{L}$ on $C\times G^{e-1}_d$ and a rank $e$ \textit{universal sub-bundle} $\mathcal{S} \subset \phi_*\mathscr{L}$, where $\phi$ is the projection $C\times G^{e-1}_d \rightarrow G^{e-1}_d$, such that for each $[V] \in G^{e-1}_d$ the homomorphism
$$
\mathcal{S}\otimes \mathbb{C}([V]) \rightarrow H^0(\phi^{-1}([V]),\mathscr{L}\otimes \mathscr{O}_{\phi^{-1}([V])})
$$
is injective. Let $c : G^{e-1}_d \rightarrow \text{Pic}^d(C)$ be the natural map sending $V \subset H^0(C,L)$ to $L$. By its universal property, $\mathcal{F}_d$ commutes with base-change, hence
$$
\text{\underline{Hom}}(c^*\mathcal{F}_d,\mathcal{N}) \xrightarrow \cong \phi_*(\mathscr{L}\otimes \phi^*\mathcal{N})
$$
for any quasi-coherent $\mathcal{N}$ on $G^{e-1}_d$. Hence, $\text{\underline{Hom}}(c^*\mathcal{F}_d,\mathcal{S}^{\vee}) \cong \phi_*\mathscr{L}\otimes \mathcal{S}^{\vee}\cong \text{\underline{Hom}}(\mathcal{S},\phi_*\mathscr{L})$ so the given inclusion of $\mathcal{S}$ in $\phi_*\mathscr{L}$ yields a quotient $c^*\mathcal{F}_d \rightarrow \mathcal{S}^{\vee} \rightarrow 0$. From this we get a quotient of exterior powers:
$$
c^*\hspace{-2pt}\wedge^k\mathcal{F}_d \rightarrow \wedge^k\mathcal{S}^{\vee} \rightarrow 0
$$
So we get a natural inclusion map $\mathbb{P}(\wedge^k\mathcal{S}^{\vee}) \rightarrow \mathbb{P}(c^*\hspace{-2pt}\wedge^k\mathcal{F}_d)$ which we can compose with $\mathbb{P}(c^*\hspace{-2pt}\wedge^k\mathcal{F}_d) \rightarrow \mathbb{P}\wedge^k\mathcal{F}_d = \text{Div}^{n(d)}(C_k)$ to get:
\begin{equation}\label{surjection}
\phi_e : \mathbb{P}(\wedge^k\mathcal{S}^{\vee}) \rightarrow \text{Div}^{n(d)}(C_k)
\end{equation}
Using Theorems \ref{irreds} and \ref{inters}, this map surjects onto $(C_k)^e_{n(d)} \subset \text{Div}^{n(d)}(C_k)$ - one can see this by, for example, taking $V = H^0(C,L)$ and concluding the surjection fiberwise from the case covered in section \ref{lin-alg}. Again, one can make an analogous construction in the symmetric setting to get $\psi_e : \mathbb{P}(S^k\mathcal{S}^{\vee}) \rightarrow \text{Div}^{t(d)}(C_k)$ surjecting onto $(C_k)^e_{t(d)}$.\\
\\
By the fact that the following diagram (and its analogue in the
symmetric case) is easily seen to commute
\begin{center}
  \begin{tikzcd}
    \mathbb{P}(\wedge^k\mathcal{S}^{\vee}) \arrow[r,twoheadrightarrow,"\phi_e"]\arrow[d]
    & (C_k)^e_{n(d)} \arrow[r,symbol=\subseteq] \arrow[d] & \text{Div}^{n(d)}(C_k)\arrow[d,"u"]\\
    G^{e-1}_d \arrow[r,twoheadrightarrow,"c"] & W^{e-1}_d \arrow[r,symbol=\subseteq] & \text{Pic}^d(C)
  \end{tikzcd}
\end{center}
and the result of Theorem \ref{inters} that the components
$(C_k)^e_{\lambda}$ are fibered in subspace varieties, we see that the
maps $\phi_e$ and $\psi_e$ are \textit{desingularizations} whenever $k
\geq 3$ (and $e \not= k+1$ for $\phi_e$) since they are
desingularizations fiberwise along $u$ (see section \ref{lin-alg}) and the domains of the maps are smooth.\\
\\
Though less would suffice, this in particular makes the dimensions of
the $(C_k)^e_{\lambda}$'s immediately clear:
\begin{theorem}\label{comp-desing}
For $C$ Petri-general, $k \geq 3$ and $e \in \mathcal{E}(k,\lambda)$
the irreducible components $(C_k)^e_{\lambda}$ have dimensions as follows:
    $$
\text{\emph{dim} }(C_k)^e_{\lambda} = \rho(g,e-1,d) + {e + \epsilon\choose k} - 1
$$
where $\epsilon = 0$ if $\lambda = n(d)$ and $\epsilon = k-1$ if $\lambda = t(d)$.
\end{theorem}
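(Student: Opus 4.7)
The plan is to read off the dimension directly from the desingularizations $\phi_e$ and $\psi_e$ constructed immediately before the theorem, since birational maps between irreducible varieties preserve dimension.

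First, I would observe that the domains of these maps are projective bundles over the Brill-Noether variety $G^{e-1}_d(C)$. Concretely, $\mathbb{P}(\wedge^k \mathcal{S}^\vee)$ is a projective bundle whose fiber over a point $[V] \in G^{e-1}_d$ is $\mathbb{P}\wedge^k V^\vee$, which has projective dimension $\binom{e}{k} - 1$; similarly $\mathbb{P}(S^k \mathcal{S}^\vee)$ has fiber $\mathbb{P}S^k V^\vee$ of dimension $\binom{e+k-1}{k} - 1$. Therefore
\[
\dim \mathbb{P}(\wedge^k \mathcal{S}^\vee) = \dim G^{e-1}_d + \binom{e}{k} - 1, \qquad
\dim \mathbb{P}(S^k \mathcal{S}^\vee) = \dim G^{e-1}_d + \binom{e+k-1}{k} - 1.
\]

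Next, since $C$ is Petri-general and $e - 1 \in \mathcal{R}_d$ forces $\rho(g, e-1, d) \geq 0$, the standard Brill-Noether dimension result (\cite[p.\ 214]{acgh1}) gives $\dim G^{e-1}_d = \rho(g, e-1, d)$ (and $G^{e-1}_d$ is smooth and irreducible). Substituting and using that $\binom{e+k-1}{k} = \binom{e+\epsilon}{k}$ when $\epsilon = k-1$, and that $\binom{e}{k} = \binom{e+\epsilon}{k}$ when $\epsilon = 0$, yields the uniform formula stated.

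Finally, to transfer this dimension count to $(C_k)^e_\lambda$, I would invoke the remarks preceding the theorem: the diagram displayed there, together with Theorem \ref{inters} identifying the fibers of $u$ over $W^{e-1}_d$ with subspace varieties, shows that $\phi_e$ and $\psi_e$ are fiberwise the desingularizations of $\text{Sub}_e(\wedge^k V)$ and $\text{Sub}_e(S^k V)$ from section \ref{lin-alg}, hence birational onto their images $(C_k)^e_\lambda$ (for $k \geq 3$, and $e \neq k+1$ in the skew case, so that Remark \ref{coincidences} does not interfere). Since birational surjections between irreducible varieties preserve dimension, the claim follows. The argument is essentially bookkeeping rather than containing a genuine obstacle; the only point that requires any care is ensuring we are in the regime $k \geq 3$ and $e \in \mathcal{E}(k,\lambda)$ so that the incidence variety really is a desingularization and the fiber dimensions $\binom{e+\epsilon}{k} - 1$ are attained generically.
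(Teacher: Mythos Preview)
Your proposal is correct and follows essentially the same approach as the paper: the paper constructs the maps $\phi_e$ and $\psi_e$, observes they are desingularizations for $k\geq 3$ (with $e\neq k+1$ in the skew case, which is already excluded by $e\in\mathcal{E}(k,n(d))$), and then simply states that the dimensions are ``immediately clear''; you have supplied exactly the bookkeeping the paper leaves implicit.
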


\noindent By Remark \ref{kempf-res} we know that for $k \geq 3$ the \textit{fibers} of $u :
(C_k)^e_{\lambda} \rightarrow W^{e-1}_d$ are normal and Cohen-Macaulay,
and - for $k \geq 3$ - have at worst rational singularities.

\section{Examples}

Finally we present some examples to illustrate the results above.\\
\\
Before interpreting the results above in full generality, we return briefly to the projective plane to study some less standard behavior for symmetric cubes:

\begin{example}[Symmetric cube of plane quintic]
The symmetric cube $C_3$ of a smooth plane quintic curve is the first
example of a symmetric product with exorbitant canonical linear series
where the intersection of the main
paracanonical system with the canonical linear series is a \textit{proper}
subspace-variety - i.e. cannot be described as a mere secant variety
of the appropriate Grassmannian, as in the case of symmetric squares.\\
\\
The canonical divisors on $C$ are cut out by conics in the
plane. On the symmetric square of $C$ we can produce canonical
divisors first by taking pencils of these conics in the plane, but
there are others - the pencils form $\mathbb{G}(1,5) \subsetneq
\mathbb{P}\wedge^2H^0(\mathscr{O}_C(2))$. On the symmetric cube of $C$ we can produce canonical divisors first by taking nets of these conics in the plane and, for each conic in such a net, take the collection of triples of points in its intersection with $C$ - in this way we sweep out a canonical divisor in $C_3$. These canonical divisors can all be deformed algebraically since they are cut out by sections which are decomposable in $\wedge^3H^0(K_C) \cong H^0(K_{C_3})$. Relatedly, canonical divisors produced using these nets only form the Grassmannian $\mathbb{G}(2,5)$ in $|K_{C_3}|$ - the remaining canonical divisors are not so easy to describe geometrically. Nevertheless, a larger family of canonical divisors than just those in the Grassmannian are actually deformable. The full family of deformable canonical divisors in $\text{Sub}_5(\wedge^3H^0(K_C)) \subset |K_{C_3}|$ which is a $14$-dimensional singular subvariety which contains the Grassmannian.\\
\\
The fact that the deformable canonical family here is not a secant variety as in the case of symmetric squares is ultimately a result of the fact that $3$-vectors (elements of $\wedge^3V$) can have enclosing dimension which is \textit{not} a multiple of $3$ - those $3$-vectors of fixed such enclosing dimension for a subspace variety which both contains and is contained in secant varieties of the Grassmannian.
\end{example}

\begin{example}[Symmetric products have exorbitant canonical bundle]\label{main-par}
We note in particular that the results above yield a dimension
calculation for the main paracanonical system
$\mathcal{P}_{\text{main}}$ of the symmetric product $C_k$ (that is,
the unique component of $\text{Div}^{\kappa}(C_k)$ dominating
$\text{Pic}^{\kappa}(C_k)$, for $\kappa = c_1(K_{C_k})$). We have $\text{dim } \mathcal{P}_{\text{main}} = \text{dim }\text{Pic}^{\kappa}(C_k) + \text{dim }|N_L|$ for $L$ a generic paracanonical bundle on $C$. This yields $\text{dim } \mathcal{P}_{\text{main}} = g + {g-1\choose k} - 1$. On the other hand, $\text{dim }|K_{C_k}| = {g\choose k}-1$. So we have:
\begin{align*}
\text{dim }|K_{C_k}| -  \text{dim } \mathcal{P}_{\text{main}}& = {g \choose k} - 1 - (g + {g-1\choose k} - 1)\\
& = {g - 1\choose k-1} - g
\end{align*}
which is negative for $k = 2$ but positive in general for $k \geq 3$,
which means it is impossible for the canonical linear series to be
contained in $\mathcal{P}_{\text{main}}$.\\
\\
Importantly, our results not only guarantee this exorbitance, but
indicate precisely the intersection of the main paracanonical system
and the canonical linear series (what Castorena and Pirola
suggestively call the \textit{locus of deformable canonical divisors}
in \cite[Def. 5.3]{castpir}):
$$
\mathcal{P}_{\text{main}} \cap |K_{C_k}| =
\text{Sub}_{g-1}(\wedge^kH^0(C,K_C))
$$
Non-degeneracy of these subspace varieties, easily seen since they
contain the corresponding Grassmannian in its Pl\"{u}cker embedding,
is as expected by \cite[Prop. 5.4]{castpir}, and their dimension
(calculated in Lemma \ref{dimension}) implies that the codimension of
$\mathcal{P}_{\text{main}}\cap|K_{C_k}|$ in $|K_{C_k}|$ is
$$
\text{codim
}\text{Sub}_{g-1}(\wedge^kH^0(K_C)) = {g-1\choose k-1} -
(g-1)
$$
This, in contrast to the case for $k = 2$, is independent of
the parity of $g$ and constitutes a concrete example where the
intersection is not a hypersurface, contrasting with \cite[Thm. 1.3(ii)]{lpp2}.\\
\\
Note also that this example applies even without the Petri-general
hypothesis on $C$ since injectivity of the Petri map $\mu_0$ (see
Definition \ref{petri-map}) always holds for
paracanonical bundles $L \in \text{Pic}^{2g-2}(C)$ for all smooth
projective $C$.
\end{example}

\begin{example}[The Full Picture]
To focus on a relatively concrete but illustrative case, we will let $C$ be a Petri-general curve of genus $g = 37$. We illustrate Theorems \ref{irreds} and \ref{inters} and Corollary \ref{components} by studying systems $|N_L|$ for $L \in \text{Pic}^{g-1}(C)$, which is the translate of the Picard variety in which the theta divisor can be naturally thought to live. Up to translations, $\Theta = W^0_{g-1}$.  We have $\rho(g,d,r) = 37 - (r+1)^2$, hence we have nontrivial Brill-Noether loci $W_{g-1}, W^1_{g-1},\ldots,W^5_{g-1}$ of dimensions $36$, $33$, $28$, $21$, $12$ and $1$ respectively. We will consider $\text{Div}^{n(g-1)}(C_2)$ and $\text{Div}^{n(g-1)}(C_3)$.\\
\\
\textbf{The symmetric square (case $k = 2$)}: we have simply $e_1,e_2,e_3 = 2,4,6$ respectively, and $r_1,r_2,r_3 = 1,3,5$ respectively. $\text{Div}^{n(g-1)}(C_2)$ therefore has three components $(C_2)^2, (C_2)^4$ and $(C_2)^6$ supported over $W^1_{g-1}$, $W^3_{g-1}$ and $W^5_{g-1}$, respectively (via the Abel-Jacobi map $u : \text{Div}^{n(g-1)}(C_2) \rightarrow \text{Pic}^{g-1}(C)$). The component $(C_2)^2$ is:
\begin{itemize}
\item a \textit{top-dimensional} irreducible component of $\text{Div}^{n(g-1)}(C_2)$ (of dimension $33$) which is birational to $W^1_{g-1} \setminus W^2_{g-1}$ (via the Abel-Jacobi map)
\item a $\mathbb{P}^2$-bundle locally over $W^2_{g-1}\setminus W^3_{g-1}$
\item a $G(2,4)$-bundle locally over $W^2_{g-1}\setminus W^3_{g-1}$
\item a $G(2,5)$-bundle locally over $W^3_{g-1}\setminus W^4_{g-1}$
\item (finally) a $G(2,6)$-bundle locally over the whole curve $W^5_{g-1}$.
\end{itemize}
The locus $(C_2)^4$ is a $21 + 5 = 26$-dimensional irreducible component of $\text{Div}^{n(g-1)}(C_2)$ which is:
\begin{itemize}
\item a $\mathbb{P}^5$-bundle locally over $W^3_{g-1}\setminus W^4_{g-1}$
\item a $\mathbb{P}^9$-bundle locally over $W^4_{g-1}\setminus W^5_{g-1}$
\item (finally) a $\text{Sec}_3\hspace{1pt}G(2,6)$-bundle over the whole curve $W^5_{g-1}$.
\end{itemize}
Lastly, $(C_2)^6$ is a $\mathbb{P}^{14}$-bundle over $W^5_{g-1}$.\\
\\
\textbf{The symmetric cube (case $k = 3$)}: we have $e_1,e_2,e_3 = 3,5,6$ respectively, and $r_1,r_2,r_3 = 2,4,5$ respectively. $\text{Div}^{n(g-1)}(C_3)$ therefore has three components, $(C_3)^3$, $(C_3)^5$ and $(C_3)^6$ supported over $W^2_{g-1}$, $W^4_{g-1}$ and $W^5_{g-1}$, respectively. The component $(C_3)^3$ is:
\begin{itemize}
\item a \textit{top-dimensional} irreducible component of $\text{Div}^{n(g-1)}(C_3)$ (of dimension $28$) which is birational to $W^2_{g-1}\setminus W^3_{g-1}$
\item a $\mathbb{P}^3$-bundle over $W^3_{g-1}\setminus W^4_{g-1}$
\item a $G(3,5)$-bundle over $W^4_{g-1} \setminus W^5_{g-1}$
\item (finally) a $G(3,6)$-bundle over the whole curve $W^5_{g-1}$.
\end{itemize}
The component $(C_3)^5$ is an irreducible component of dimension $12 + 9 = 21$ which is:
\begin{itemize}
\item a $\mathbb{P}^9$-bundle locally over $W^4_{g-1}\setminus W^5_{g-1}$
\item (finally) a $\text{Sub}_5(\wedge^3\mathbb{C}^6)$-bundle over the whole curve $W^5_{g-1}$.
\end{itemize}
Lastly, $(C_3)^6$ is a $\mathbb{P}^{19}$-bundle over $W^5_{g-1}$.\\
\\
Note that $\text{Sub}_5(\wedge^3\mathbb{C}^6)$ is a subspace variety of dimension 14 which properly contains the Grassmannian $G(3,6)$ (dimension 9) and is properly contained in the chordal (i.e. 2-secant) variety $\text{Sec}_2\hspace{1pt}G(3,6) = \mathbb{P}^{19}$ (because secant varieties of Grassmannians $G(k,n)$ for $k > 2$ are \textit{not} deficient - see \cite[Theorem 2.1]{cgg}).
\end{example}

\begin{example}[Base locus of $\mathcal{P}_{\text{main}}$]
In \cite[Corollary 1.4]{lpp2}, Lopes-Pardini-Pirola show that on a
general type surface with no irrational pencils of large genus, the
base locus $Z_{\kappa}$ of the main paracanonical system (see Example
\ref{main-par}) is contained in that of the canonical linear
series. In \cite[Proposition 1.3]{castpir}, Castorena-Pirola generalize
to higher dimensions. Given the possibility that $|K_X|$ is
exorbitant, this containment is far from obvious. One might wonder if
this phenomenon can be observed on an appropriately chosen symmetric
product. In fact it cannot: specifically, the main paracanonical
system on $C_k$ will never have a base locus for $k < g$ (which is the
interesting range where exorbitance is a priori possibility). This is because any point $D$ in the base locus would necessarily correspond to a degree $k$ divisor on the curve $C$ failing to impose $k < g$ independent conditions on \textit{all} line bundles $L$ of degree $2g-2$. By Riemann-Roch, this would imply that all divisors of degree $k$ are effective which is not true for any curve $C$ since the effective line bundles of degree $k < g$ form a $k$-dimensional closed subvariety of $\text{Pic}^k(C)$.
\end{example}

\begin{example}[Fano surfaces of cubic threefolds]
The Fano surface $F(X)$ of lines in a smooth cubic threefold $X$ is
smooth with irregularity $5$. By \cite[Corollary 1.5]{lpp2} its canonical series is not
exorbitant (hence its paracanonical system is an irreducible variety of dimension $p_g(F(X)) = 10$ and every canonical divisor is therefore deformable). However, deforming $X$ to a nodal cubic $X_0$ causes $F(X)$ to
deform to a singular surface $F(X_0)$ whose normalization is $\nu: C_2 \rightarrow F(X_0)$, for $C$ a
non-hyperelliptic genus 4 curve - the latter surface \textit{does} have
exorbitant canonical series, as seen in Example \ref{eg}. How do we reconcile these facts? Of course, normalizations
do not necessarily behave well in flat families. What we can say is that if a canonical divisor $D$ on $C_2$ had image $\nu(D)$ in $F(X_0)$ which deformed in a flat family (out of its linear equivalence class), one could lift the family to an algebraic deformation of $D$. Hence the non-deformable canonical divisors on $C_2$ will have non-deformable images on $F(X_0)$ and so $\text{Div}^{\kappa}(F(X_0))$ (indeed, the connected component of the Hilbert Scheme $\text{Hilb}^{\kappa}(F(X_0))$ containing canonical divisors) will be reducible - the canonical series on $F(X_0)$ will be exorbitant. Given the situation for smooth $X$, this demonstrates the possibility of developing exorbitance in families when singularities are introduced.
\end{example}

\begin{example}[Resolving the singular strata of theta divisors]
Recall that the Brill-Noether locus $W^0_{g-1}(C)$ is naturally identified with (a translate of) the theta divisor of $C$ in its Jacobian $J(C) \cong \text{Pic}^{g-1}(C)$. As a result of the Riemann Singularity Theorem (see \cite[pg. 226]{acgh1}) we know that for $C$ Petri-general, the singular locus of $W^0_{g-1}$ is $W^1_{g-1}$ and consequently the Abel-Jacobi mapping $C_{g-1} \rightarrow W^0_{g-1}$ is a resolution of singularities. By what we have said above, a similar phenomenon occurs for the singular locus of $W^1_{g-1}$ when using the divisor variety of the symmetric square of the curve, for the singular locus of $W^2_{g-1}$ when using the symmetric cube, and so on. Specifically, we have:
  \begin{corollary}
    For $\lambda = n(g-1)$, the component $(C_{r+1})^{r+1}_{\lambda}$ of $\text{Div}^{n(g-1)}(C_{r+1})$ is smooth and the Abel-Jacobi mapping $(C_{r+1})^{r+1}_{\lambda} \rightarrow W^r_{g-1}$ is a resolution of singularities.
  \end{corollary}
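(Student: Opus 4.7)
The proof will be a direct application of the relativized desingularization machinery set up just before Theorem \ref{comp-desing}, in the special case $e = k = r+1$. The crucial observation is that here the universal sub-bundle $\mathcal{S}$ on $G^{r}_{g-1}$ has rank exactly $r+1$, so $\wedge^{r+1}\mathcal{S}^{\vee}$ is a line bundle and the projectivization $\mathbb{P}(\wedge^{r+1}\mathcal{S}^{\vee})$ appearing in (\ref{surjection}) is canonically identified with $G^{r}_{g-1}$ itself. Under this identification the map $\phi_{r+1}$ of (\ref{surjection}) becomes a morphism
\[
\phi_{r+1}: G^{r}_{g-1} \longrightarrow (C_{r+1})^{r+1}_{n(g-1)}
\]
which, by the discussion following (\ref{surjection}), is already known to be surjective onto the component in question.

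The next step is to verify that $\phi_{r+1}$ is in fact an \emph{isomorphism}. On the fiber over any $L\in W^{r}_{g-1}$ it is the Pl\"ucker embedding $G(r+1,H^{0}(C,L))\hookrightarrow \mathbb{P}\wedge^{r+1}H^{0}(C,L)^{\vee}$, and since $e=k$ in the skew-symmetric case the remark immediately after Definition \ref{subspace-varieties} identifies its image with the entire subspace variety $\text{Sub}_{r+1}(\wedge^{r+1}H^{0}(C,L))$. Thus $\phi_{r+1}$ is a proper morphism over $W^{r}_{g-1}$ whose fiber at each point is already an isomorphism onto the full corresponding fiber of the target. Upgrading this to a global isomorphism either uses the standard criterion (proper, universally injective, unramified, surjective), or -- more concretely -- the fact that the construction of $\phi_{r+1}$ in (\ref{surjection}) explicitly factors through the relative Pl\"ucker closed immersion $G^{r}_{g-1}\hookrightarrow \mathbb{P}(c^{*}\wedge^{r+1}\mathcal{F}_{d})$ followed by the base-change projection to $\mathbb{P}\wedge^{r+1}\mathcal{F}_{d}$.

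With the isomorphism in hand, both assertions of the corollary are immediate. Smoothness of $(C_{r+1})^{r+1}_{n(g-1)}$ follows from the smoothness of $G^{r}_{g-1}$ under the Petri hypothesis (Gieseker's theorem, recalled in Section \ref{bn-petri}). For the resolution of singularities claim, the Abel--Jacobi map becomes, under the identification, the classical forgetful morphism $G^{r}_{g-1}\to W^{r}_{g-1}$ sending $(L,V)\mapsto L$. This is a proper morphism from a smooth variety, and it restricts to an isomorphism over $W^{r}_{g-1}\setminus W^{r+1}_{g-1}$ because $h^{0}(L)=r+1$ forces $V=H^{0}(C,L)$ to be the unique point of the fiber; so it is birational. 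Since, on a Petri-general curve, the generalization of the Riemann--Kempf singularity theorem identifies the singular locus of $W^{r}_{g-1}$ with $W^{r+1}_{g-1}$, the map is an isomorphism precisely over the smooth locus, exhibiting it as a resolution of singularities.

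The main technical obstacle is the fiberwise-to-global upgrade for $\phi_{r+1}$; however, because this map is built from the projectivization of a quotient of locally free sheaves corresponding to the tautological inclusion $\wedge^{r+1}\mathcal{S}\hookrightarrow c^{*}\wedge^{r+1}\mathcal{F}_{d}^{\vee}$, it is in fact a closed immersion by standard properties of the Pl\"ucker construction, and the obstacle dissolves.
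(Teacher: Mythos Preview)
Your proposal is correct and is exactly the argument the paper intends: the corollary is stated there without proof, as an immediate consequence of the desingularization construction preceding Theorem~\ref{comp-desing} specialized to $e=k=r+1$, where $\mathbb{P}(\wedge^{r+1}\mathcal{S}^{\vee})\cong G^{r}_{g-1}$ and the fiberwise map is the Pl\"ucker isomorphism onto $\text{Sub}_{r+1}(\wedge^{r+1}H^0(L))=G(r+1,H^0(L))$. You correctly isolate the fiberwise-to-global upgrade of $\phi_{r+1}$ as the one genuine subtlety and handle it adequately; note only that the ``factorization'' justification in your final paragraph is slightly loose (the base-change projection $\mathbb{P}(c^{*}\wedge^{r+1}\mathcal{F}_d)\to\mathbb{P}\wedge^{r+1}\mathcal{F}_d$ is not itself a closed immersion), so the cleaner route is your first option---proper, bijective on closed points, and unramified, the last following because on each Abel--Jacobi fiber the map is the Pl\"ucker embedding.
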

\end{example}

\begin{example}[Paracanonical system on \'{e}tale and branched double covers of $C_2$]
  Let $\pi: X \rightarrow C_2$ be a smooth double cover, branched along some divisor $B$ whose associated line bundle is a square (though the root, which we will still denote by $B/2$, may not be effective). By Riemann-Hurwitz, we have
  $$
K_X = \pi^*(K_{C_2} + B/2)
$$
so that by the projection formula, we have:
\begin{align*}
  H^0(K_X) & = H^0(K_{C_2}) \oplus H^0(K_{C_2} + B/2)
\end{align*}
(since $\pi_*\mathscr{O}_X \cong \mathscr{O}_{C_2}\oplus \mathscr{O}_{C_2}(-B/2)$).\\
\\
With a little work analogous to what has gone before in this paper, and assuming $B$ is chosen so that $\text{Pic}^0(X) \cong \text{Pic}^0(C_2)$ (which is often the case, but does fail if for example $B = \Delta$), this decomposition (or, more precisely, its dual) globalizes to an analogous statement for Picard sheaves, so we get:
$$
\mathcal{F}_{c_1(K_X)} = \mathcal{F}_{c_1(K_{C_2})}\oplus \mathcal{F}_{\lambda}
$$
where $\lambda := c_1(K_{C_2} + B/2)$. There is some care to be taken here given that Picard sheaves are only well-defined up to twisting by a line bundle, but this will not concern what we conclude here.\\
\\
Given this, we can identify the paracanonical system (let $\kappa := c_1(K_X)$):
$$
\text{Div}^{\kappa}(X) = \mathbb{P}(\mathcal{F}_{c_1(K_{C_2})} \oplus \mathcal{F}_{\lambda})
$$
and when, for example, $B \in |T_L|$ for a (even) degree $d$ line bundle $L$ on $C$, we will have $\mathcal{F}_{c_1(K_{C_2})} = \wedge^2\mathcal{F}_{2g-2}$ and $\mathcal{F}_{\lambda} = \wedge^2\mathcal{F}_{2g-2+d/2}$ for $\mathcal{F}_{2g-2}$ and $\mathcal{F}_{2g-2+d/2}$ Picard sheaves of the curve, for appropriate degrees. Note that $B$ cannot be in $|N_L|$ for any $L$ on $C$ since $N_L$ is never divisible in $\text{Pic}(C_2)$.\\
\\
The deformation result of Theorem \ref{deform} can then be applied in the current setting to conclude that:
\begin{itemize}
\item For $B = T_L$ and $d > 0$ we have:
  $$
|K_X| \cap \text{Div}^{\kappa}(X)_{\text{main}} = \text{Cone}_{\mathbb{P}}(\Sigma)
$$
for $\Sigma =
\text{Sec}_{\lfloor\frac{g-1}{2}\rfloor}\mathbb{G}(1,|K_C|)$,
$\mathbb{P} = \mathbb{P}\wedge^2H^0(K_C + L/2)^{\vee}$ and
$\text{Cone}_{\Lambda}(X)$ denotes the projective cone, with vertex a
projective subspace $\Lambda$, over an embedding of a variety $X$ in projective space. Note here that $L/2$ is well-defined since $B/2$ is.
\item For $B = 0$ (i.e. in the \'{e}tale case) we have:
  $$
|K_X| \cap \text{Div}^{\kappa}(X)_{\text{main}} = \text{Join}(\Sigma,\Sigma)
$$
where $\text{Join}(Y,Z)$ denotes the union of all lines meeting two varieties $Y$ and $Z$ in a fixed projective space, and these two copies of $\Sigma$ lie in the non-intersecting subspaces of $\mathbb{P}H^0(K_X)$ corresponding to the isomorphic summands $\wedge^2H^0(K_C)$ of $H^0(K_X)$.
  \end{itemize}
\end{example}

\bibliography{divisor-varieties}{}
\bibliographystyle{plain}

\end{document}